\documentclass[10pt]{amsart}

\usepackage[T1]{fontenc}

\usepackage{amsmath, mathrsfs, amsfonts, amssymb, enumerate, caption, subcaption}
\usepackage[numbers]{natbib}

\usepackage{nameref}
\usepackage{zref-xr,zref-user}
\zxrsetup{toltxlabel}

\usepackage{tikz}
\usetikzlibrary{arrows, calc}
\usetikzlibrary{patterns, shapes, decorations.markings}

\usepackage[colorlinks,citecolor=blue,urlcolor=blue]{hyperref}
\usepackage{pgfplots}
\pgfplotsset{/pgf/number format/use comma,compat=newest}
\usetikzlibrary{external}
\tikzexternalize

\numberwithin{equation}{section}

\numberwithin{equation}{section}
\theoremstyle{definition}
\newtheorem{definition}[equation]{Definition}
\theoremstyle{definition}
\newtheorem{remark}[equation]{Remark}
\theoremstyle{definition}

\theoremstyle{definition}

\theoremstyle{lemma}

\theoremstyle{lemma}
\newtheorem{lemma}[equation]{Lemma}
\theoremstyle{theorem}
\newtheorem{theorem}[equation]{Theorem}
\theoremstyle{proposition}
\newtheorem{proposition}[equation]{Proposition}
\theoremstyle{corollary}
\newtheorem{corollary}[equation]{Corollary}

\theoremstyle{corollary}
\theoremstyle{definition}

\theoremstyle{example}

\theoremstyle{proposition}

\theoremstyle{definition}

\DeclareMathOperator*{\supp}{supp}

\newcommand{\R}{\mathbf{R}}

\newcommand{\E}{\mathbf{E}}
\renewcommand{\P}{\mathbf{P}}

\newcommand{\N}{\mathbf{N}}

\title[Nos\'{e}-Hoover under Brownian heating]{Exponential relaxation of the Nos\'{e}-Hoover Thermostat under Brownian heating}
\author{David P. Herzog}
\address{Department of Mathematics, Iowa State University, Ames, IA 50011}

\begin{document}\maketitle
\begin{abstract}
We study a stochastic perturbation of the Nos\'{e}-Hoover equation (called the Nos\'{e}-Hoover equation under Brownian heating) and show that the dynamics converges at a geometric rate to the augmented Gibbs measure in a weighted total variation distance.  The joint marginal distribution of the position and momentum of the particles in turn converges exponentially fast in a similar sense to the canonical Boltzmann-Gibbs distribution.  The result applies to a general number of particles interacting through wide class of potential functions, including the usual polynomial type as well as the singular Lennard-Jones variety.                                       
\end{abstract}

\section{Introduction}

Efficient sampling from the canonical Boltzmann-Gibbs distribution is a fundamental challenge in molecular dynamics simulation.  The most well-known method used to address this challenge is Markov chain Monte Carlo (MCMC) due to Metropolis, Rosenbluth and Rosenbluth~\cite{Met_53}.  The method is based on random walk proposals to generate unbiased samples from a distribution whose density is known up to a normalization constant.  However, it can lead to high correlation.  Moreover, even if one corrects for high correlation via the Metropolis step in Hybrid Monte Carlo (HMC)~\cite{Dua_87, Horo_91}, the correction can be computationally expensive in high dimensions due to gradient evaluations.

The method of stochastic gradients has been successful in addressing the cost of such evaluations~\cite{Ahn_12, Chen_14, Patterson_13, Welling_11}.  The approach is often based on the design of a stochastic differential equation (SDE) whose stationary distribution (or marginal stationary distribution) is the prescribed canonical measure, e.g. Langevin dynamics~\cite{Chen_14, leimkuhler2015computation}.  In this vein, one such SDE that has gained recent interest is the so-called Nos\'{e}-Hoover equation under Brownian heating~\eqref{eqn:main}~\cite{Jones_11, Ding_14} (see also~\cite{Dett_07} for related systems).  In essence,~\eqref{eqn:main} is the usual Langevin system, but it has been augmented by a fictitious control variable.  

Compared with the usual Langevin dynamics, the introduction of this auxiliary variable serves a few important purposes.  First, stochastic gradients can invite noise into the system that is difficult to control~\cite{Chen_14}.  The auxiliary variable is designed to self-correct for this.  Second, from a statistical mechanics perspective, existing methods to sample from the canonical measure often fail to keep the system temperature (defined as mean kinetic energy) near a desired value.  The new control variable does just this by effectively acting as a thermostat.  Lastly, Langevin dynamics has the tendency to spend too much time at small momentum values, thus taking too long to explore phase values.  When the momentum is small, the control variable in~\eqref{eqn:main} pushes it out, allowing for accelerated spatial sampling.

While the~\eqref{eqn:main} system appears to lead to a desirable dynamical method for sampling from the Gibbs measure in high dimensions, many of the statements made above have only been numerically verified.  As an example, it is rigorously known that~\eqref{eqn:main} is uniquely ergodic with an explicit stationary distribution whose marginal coincides with canonical distribution~\cite{Ding_14}.  This result holds true in spite of the degenerate nature of the equations. Nevertheless, how fast the dynamics approaches stationarity is a practical open problem which is made complicated by the introduction of the thermostat, the very object that leads to the convenient properties above.  Intuitively, the auxiliary variable forces the system to large values, and thus how dissipation balances this effect leading to a convergence rate is delicate.  In this paper, we solve this open problem by showing that the~\eqref{eqn:main} system approaches the augmented stationary distribution exponentially fast in an appropriate weighted total variation distance.  By integrating out the control variable, the joint position and momentum distribution in turn converges exponentially fast to the canonical Boltzmann-Gibbs measure in a similar sense.    

It is important to point out that the main convergence result holds for a wide class of potential functions which we call \emph{normal} below (see Definition~\ref{def:normpot}).  This class includes the usual polynomial nonlinearities as well as the singular, Lennard-Jones interactions.  While the methods of hypocoercivity~\cite{Villani_2009, CG_10, Grothaus_Stilgenbauer_2015} have proved useful in extracting convergence rates for such potentials in the Langevin dynamics case, we follow the perturbation methods developed in~\cite{MSH_02, cooke17:_geomet_ergod_two, HerMat17} and construct an explicit Lyapunov function.  In essence, the type of Lyapunov function exhibited here ensures that return times to large compact sets in space have exponential moments.  Thus with the appropriate support and regularity properties of the Markov transitions, geometric convergence to stationarity follows.    

The organization of the paper is as follows.  In Section~\ref{sec:notation}, we introduce the~\eqref{eqn:main} system, fix notation and terminology, and state the main results to be proved in this paper.  Section~\ref{sec:heuristics} provides heuristic ideas behind the construction of the Lyapunov function.  Section~\ref{sec:lyap}, Section~\ref{sec:sup_con} and the Appendix contain the proofs of the main results.

\section{Notation, Terminology, and Main Results}
\label{sec:notation}

Throughout this paper, we study the following system of SDEs
\begin{align}
\label{eqn:main} \tag{NHB}
 dq_i&= \frac{p_i}{m_i}\, dt  \\
\nonumber dp_i &= -\xi p_i  \, dt- \frac{\gamma}{m_i} p_i  \, dt - \nabla_{q_i} U(q) \, dt + \sqrt{2\gamma k_B T}\,  dB_i \\
\nonumber  d\xi&= \sum_{i=1}^N \frac{|p_i|^2}{a m_i} \, dt  - \frac{ k_B T kN}{a}  \,dt . 
\end{align}
The relations above describe the motion of $N\geq 1$ particles in $\R^k$ with position vector $q=(q_i)_{i=1}^N \in (\R^k)^N$, momentum vector $p=(p_i)_{i=1}^N \in (\R^k)^N$ and mass vector $m=(m_i)_{i=1}^N\in (0,\infty)^N$. Each of the particles is subject to friction ($-\tfrac{\gamma}{m_i} p_i \, dt$), thermal fluctuations ($\sqrt{2\gamma k_B T} dB_i$) and a control mechanism $\xi\in \R$, called the \emph{thermostat}, which enacts a friction-like force on the system.  The positive parameters $k_B, \gamma, T, a$ are the Boltzmann, friction, temperature and auxiliary constants, respectively, while the $B_i$, $i=1,2,\ldots, N$, are mutually independent standard $\R^k$-valued Brownian motions defined on a probability space $(\Omega, \mathcal{F}, \P, \E)$.  The function $U:(\R^k)^N\rightarrow [0, +\infty]$ is the potential, and it encapsulates potential forces on the system as well as any potential interactions between the particles.  Throughout, we will assume that $U$ is \emph{normal}, as in the following definition.   
\begin{definition}
\label{def:normpot}
We call a function $U : (\R^k)^N\rightarrow [0, +\infty]$ \emph{normal} if it satisfies the following conditions:
\begin{itemize}
\item[(A1)] The set $\mathcal{O}= \{ q\in (\R^k)^N \, : \, U(q) < \infty\}$ is non-empty, open and connected. 
\item[(A2)]  For every $n\in \N$, the set $\mathcal{O}_n= \{ q\in (\R^k)^N \, : \, U(q) < n\}$ has compact closure in $(\R^k)^N$.        
\item[(A3)]  $U\in C^\infty(\mathcal{O})$ and $\int_\mathcal{O} \exp(-\beta U(q)) \, dq < \infty$ where $\beta:=1/(k_B T)$.
\item[(A4)]  There exists a constant $\zeta\in (1,2)$ such that for any sequence $\{ z_n\}\subseteq (\R^k)^N$ with $U(z_n)\rightarrow \infty$ as $n\rightarrow \infty$ we have 
\begin{align*}
|\nabla U(z_n)| \rightarrow \infty\,\, \text{ and } \,\, \frac{|\nabla^2 U(z_n)|}{| \nabla U(z_n)|^\zeta} \rightarrow 0  
\end{align*}        
as $n\rightarrow \infty$ where $\nabla^2$ denotes the Hessian operator.  
\end{itemize}
\end{definition}  

\begin{remark}
The concept of an \emph{admissible} potential $U$ was introduced in~\cite{HerMat17} to study relaxation properties of Langevin dynamics under a wide class of potentials.  Such a class includes, for example, the Lennard-Jones singular variety.  There is however very little difference between the conditions satisfied by an admissible and normal potential; that is, $U:(\R^k)^N\rightarrow[0, +\infty]$ is \emph{admissible} if it satisfies conditions (A1)-(A3) and condition
\begin{itemize}
\item[(A4')] For any sequence $\{ z_n\}\subseteq (\R^k)^N$ with $U(z_n)\rightarrow \infty$ as $n\rightarrow \infty$ we have 
\begin{align*}
|\nabla U(z_n)| \rightarrow \infty\,\, \text{ and } \,\, \frac{|\nabla^2 U(z_n)|}{| \nabla U(z_n)|^2} \rightarrow 0  
\end{align*}        
as $n\rightarrow \infty$.  
\end{itemize}
Note that condition (A4') is slightly weaker than (A4) in terms of the asymptotic growth of the Hessian relative to the gradient at large potential energy.  Nevertheless, the class of normal potentials is still very wide and includes the usual, polynomial-type potentials as well as the Lennard-Jones singular type.  For further discussion as well as specific examples of normal potentials, we refer the reader to Section~4 of~\cite{HerMat17}.  For other works where similar conditions on $U$ were employed, refer to~\cite{CG_10, Grothaus_Stilgenbauer_2015, Talay_2002}.               
\end{remark}

For notational simplicity, throughout we let $\mathcal{X}=\mathcal{O}\times (\R^k)^N \times \R$ and use $x$ or $(q,p,\xi)$ to denote a generic point in $\mathcal{X}$.  Similarly, the process solving equation~\eqref{eqn:main} will often be denoted more simply by either $(q(t), p(t), \xi(t))$ or $x(t)$.  We let $\mathcal{B}(\mathcal{X})$ denote the Borel $\sigma$-field of subsets of $\mathcal{X}$.  

Equation~\eqref{eqn:main} has Hamiltonian $H:\mathcal{X}\rightarrow [0, \infty)$ defined by
\begin{align}
\label{eqn:Hdef}
H(q,p, \xi) = \frac{1}{2}\|p\|_m^2 + U(q) + \frac{a\xi^2}{2}
\end{align} 
where 
\begin{align}
\frac{1}{2}\|p\|_m^2 := \frac{1}{2}\sum_{i} m_i^{-1}|p_i|^2
\end{align}
is the kinetic energy.  Note that since $U$ satisfies (A3), we may define a probability measure $\mu$ on $\mathcal{B}(\mathcal{X})$, called the \emph{augmented Gibbs measure}, by   
\begin{align}
\label{eqn:Gibbsdef}
\mu( dq  \,d p\,  d\xi) = \frac{1}{\mathcal{Z}} \exp(- \beta H(q,p, \xi)) \, dr\, dp \, d\xi.
\end{align}  
In the above, $\beta=1/(k_B T)$ and $\mathcal{Z}>0$ is the normalization constant making $\mu$ a probability measure.

We next observe the following lemma giving strong existence and uniqueness of solutions of~\eqref{eqn:main}. 
\begin{lemma}
\label{lem:exist}
For every initial condition $x \in \mathcal{X}$, equation~\eqref{eqn:main} has unique strong solution $x(t)$ which is defined for all finite times $t\geq 0$ almost surely on the state space $\mathcal{X}$.       
\end{lemma}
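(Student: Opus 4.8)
The plan is to establish local existence and uniqueness of a strong solution via the standard theory for SDEs with smooth (but only locally Lipschitz) coefficients, and then rule out finite-time blow-up using the Hamiltonian $H$ as a Lyapunov function. The coefficients of~\eqref{eqn:main} are $C^\infty$ on the open set $\mathcal{X}=\mathcal{O}\times(\R^k)^N\times\R$ — here we use (A3), which gives $U\in C^\infty(\mathcal{O})$ — so they are locally Lipschitz on $\mathcal{X}$. Hence for each $x\in\mathcal{X}$ there is a unique strong solution up to an explosion time $\tau_\infty=\lim_{n\to\infty}\tau_n$, where $\tau_n$ is the first exit time from an increasing sequence of open sets $\mathcal{X}_n$ with compact closure in $\mathcal{X}$ exhausting $\mathcal{X}$. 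The only thing to prove is that $\tau_\infty=\infty$ almost surely. Note there are two mechanisms for $\tau_\infty<\infty$: either $(q,p,\xi)$ escapes to infinity in finite time, or $q$ reaches the boundary $\partial\mathcal{O}$ (where $U=+\infty$) in finite time; both are controlled by showing $H(x(t))$ cannot blow up.

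First I would compute the action of the generator $\mathcal{L}$ of~\eqref{eqn:main} on $H$. Writing $\mathcal{L}$ for the generator, a direct computation using the equations gives
\begin{align*}
\mathcal{L}H &= \sum_i \frac{p_i}{m_i}\cdot\nabla_{q_i}U + \sum_i \frac{p_i}{m_i}\cdot\Big(-\xi p_i - \frac{\gamma}{m_i}p_i - \nabla_{q_i}U\Big) + \gamma k_B T \sum_i \frac{k}{m_i} \\
&\quad + a\xi\Big(\sum_i \frac{|p_i|^2}{a m_i} - \frac{k_B T kN}{a}\Big).
\end{align*}
The $\pm\sum_i \frac{p_i}{m_i}\cdot\nabla_{q_i}U$ terms cancel, and the two $\xi$-terms $-\xi\sum_i\frac{|p_i|^2}{m_i}$ and $+\xi\sum_i\frac{|p_i|^2}{m_i}$ cancel as well, leaving
\begin{align*}
\mathcal{L}H = -\gamma\|p\|_m^2 + \gamma k_B T k\sum_i \frac{1}{m_i} - k_B T kN\,\xi.
\end{align*}
This is \emph{not} bounded above because of the $-k_B T kN\,\xi$ term, so $H$ alone is not quite a Lyapunov function for non-explosion. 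I would instead work with $\widetilde H = H + c\,\xi$ for a suitable constant, or more simply observe that $\mathcal{L}H \le C(1+|\xi|) \le C(1+H^{1/2}) \le C(1+H)$ using $a\xi^2/2\le H$. A Gronwall-type argument then shows $\E[H(x(t\wedge\tau_n))]\le (H(x)+Ct)e^{Ct}$, uniformly in $n$; letting $n\to\infty$ via Fatou shows $H(x(t))$ stays finite along the solution, so $\tau_\infty=\infty$ almost surely. Equivalently, one applies the standard non-explosion criterion (e.g.\ the Lyapunov test of Khasminskii): since $H\ge 0$, $H\to\infty$ as $x$ approaches $\partial\mathcal{X}$ (including the portion at infinity and the portion $\partial\mathcal{O}$, the latter by (A2)), and $\mathcal{L}H\le C(1+H)$, the solution is non-explosive.

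The main obstacle is the sign-indefinite $-k_B T kN\,\xi$ term in $\mathcal{L}H$: the thermostat variable $\xi$ genuinely can grow, and one must check that it cannot do so fast enough to drive $H$ to infinity in finite time. This is handled by the crude bound $|\xi|\le C(1+H)$ noted above, which is all that is needed for non-explosion (sharper control of $\xi$ is the subject of the Lyapunov function built later in the paper for the geometric ergodicity result). A secondary point worth stating carefully is that because $\mathcal{O}$ can be a proper subset of $(\R^k)^N$ (for singular potentials of Lennard-Jones type), the exhausting sets $\mathcal{X}_n$ must be chosen to have closure inside $\mathcal{X}$ — for instance $\mathcal{X}_n=\{x\in\mathcal{X}:H(x)<n\}$, which has compact closure in $\mathcal{X}$ precisely by conditions (A2) and (A3) — so that reaching $\partial\mathcal{O}$ is captured as an exit from some $\mathcal{X}_n$ and is thereby excluded by the same estimate. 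Once $\tau_\infty=\infty$ a.s., local uniqueness propagates to a global strong solution on $[0,\infty)$, completing the proof.
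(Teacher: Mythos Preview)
Your proposal is correct and follows essentially the same approach as the paper: use local Lipschitz continuity of the coefficients on $\mathcal{X}$ for local existence/uniqueness, then show $\mathscr{L}H \leq \alpha H + K$ (via $|\xi|\leq C(1+H)^{1/2}$) and apply the standard Gronwall/Khasminskii non-explosion criterion. The paper uses the exhausting sets $\mathcal{X}_n=\mathcal{O}_n\times B_n(0)\times(-n,n)$ rather than your sublevel sets $\{H<n\}$, and there is a harmless slip in your friction term (it should be $-\gamma\sum_i|p_i|^2/m_i^2$ rather than $-\gamma\|p\|_m^2$), but the argument is identical.
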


Lemma~\ref{lem:exist} is an immediate consequence of the main result of this paper (see Theorem~\ref{thm:lyapexist} below), but it can be established more easily by using the Hamiltonian $H:\mathcal{X}\rightarrow [0, \infty)$ defined in~\eqref{eqn:Hdef} as a Lyapunov function.  Indeed, by the standard existence and uniqueness theorem for stochastic differential equations, strong solutions of~\eqref{eqn:main} are defined and unique until the random time $\tau_{x}$ when the process started from $x\in \mathcal{X}$ exits every set $\mathcal{X}_n$, $n\in \N$, defined by 
\begin{align}
\label{eqn:Xn}
\mathcal{X}_n= \mathcal{O}_n \times B_n(0) \times (-n,n)
\end{align}
 where $B_n(0)$ denotes the open ball of radius $n$ in $(\R^k)^N$ centered at the origin.  Letting $\mathscr{L}$ denote the generator of the Markov process~\eqref{eqn:main}, one can then conclude that $\P\{ \tau_{x} = \infty\}=1$ for all $x \in \mathcal{X}$ after noting that on $\mathcal{X}$  
 \begin{align}
 \label{eqn:primitive} \mathscr{L} H(q,p, \xi) &= - \xi k_B T kN -\gamma \sum_{i=1}^N  \frac{|p_i|^2}{m_i^2}  + \sum_{i=1}^{N} \frac{\gamma k_B T }{m_i}\\
\nonumber &\leq \alpha H(q,p, \xi) + K
 \end{align}   
 for some constants $\alpha,K>0$.  That is, the conclusion that $\P\{ \tau_{x} = \infty\}=1$ for all $x \in \mathcal{X}$ then follows by a standard Gronwall comparison argument, for it implies that $t\mapsto \E_{x} H(x(t))$ grows no faster than an exponential.  See, for example, \cite{Her_11, Kh_12, MTIII, RB_06}.

\begin{remark}
For our purposes, the issue with equality~\eqref{eqn:primitive} is that it does not predict a convergence rate to the presumed equilibrium measure $\mu$.  This is because $\mathscr{L}H(q,p, \xi)$ is positive and large when $(q,p,\xi)\in \mathcal{X}$ is such that $\xi\ll -1$ and $|p|$ is bounded.  Intuitively, this means that dissipation in the system is not due to pointwise contraction of the Hamiltonian at large energies.  Rather, if one expects relaxation to equilibrium, it must be due to averaging effects in the system not captured by the (pointwise) equality~\eqref{eqn:primitive}.                 
 \end{remark}

By Lemma~\ref{lem:exist}, equation~\eqref{eqn:main} induces a Markov semigroup $(\mathscr{P}_t)_{t\geq 0}$ acting on bounded, $\mathcal{B}(\mathcal{X})$-measurable functions $\varphi: \mathcal{X}\rightarrow \R$ via   
\begin{align*}
\mathscr{P}_t \varphi(x)= \E_{x} \varphi(x(t))
\end{align*}
for all $t\geq 0$ and dually on probability measures $\nu$ on $\mathcal{B}(\mathcal{X})$ by 
\begin{align*}
(\nu \mathscr{P}_t)(A)= \int_\mathcal{X} \nu(dx) \mathscr{P}_t \textbf{1}_A(x)   
\end{align*}
for $t\geq 0$ and $A\in \mathcal{B}(\mathcal{X})$.  In the above, $\textbf{1}_A$ denotes the indicator function on the set $A$.  We shall use the notation 
\begin{align*}
\mathscr{P}_t(x, A)= \mathscr{P}_t \textbf{1}_A(x)
\end{align*}
for $t\geq 0$ and $A\in \mathcal{B}(\mathcal{X})$ to denote the transition probabilities of the Markov process $x(t)$.  

We call a probability measure $\nu$ on $\mathcal{B}(\mathcal{X})$ an \emph{invariant probability measure} for the Markov process $x(t)$ if $\nu \mathscr{P}_t= \nu$ for all $t\geq 0$.  Later, we will see that the augmented Gibbs measure $\mu$ in~\eqref{eqn:Gibbsdef} is the unique invariant probability measure for $x(t)$.  To measure convergence to $\mu$, for $W: \mathcal{X}\rightarrow (0, \infty)$ measurable we let $\mathcal{M}_W$ denote the set of probability measures $\nu$ on $\mathcal{B}(\mathcal{X})$ such that $W\in L^1(\nu)$.  We equip the set $\mathcal{M}_W$ with a metric $\rho_W: \mathcal{M}_W\times \mathcal{M}_W\rightarrow [0, \infty)$ given by 
\begin{align*}
\rho_W(\nu_1, \nu_2) =\sup_{\|\varphi\|_W \leq 1} \int_\mathcal{X} \varphi(x)(\nu_1(dx)-\nu_2(dx)) 
\end{align*}
where the weighted supremum norm $\| \varphi \|_W$ is defined for measurable $\varphi: \mathcal{X}\rightarrow \R$ by
\begin{align*}
\| \varphi\|_W = \sup_{x\in \mathcal{X}} \frac{|\varphi(x)|}{1+ W(x)}
\end{align*}

We now state the main result of the paper. 

\begin{theorem}
\label{thm:main}
We have the following:
\begin{itemize}
\item[(a)]  The augmented Gibbs measure  $\mu$ defined in~\eqref{eqn:Gibbsdef} is the unique invariant probability measure for the Markov process $x(t)$ solving~\eqref{eqn:main}.  
\item[(b)]  There exists $\beta_*>0$ such that for all $\epsilon, \beta_0$ with $0<\epsilon < \beta_0< \beta_*$ there exists $W\in C^2(\mathcal{X}; (0, \infty))$ and constants $C, \eta>0$ for which
\begin{align*}
\exp(( \beta_0 - \epsilon) H) \leq W\leq \exp( (\beta_0 + \epsilon) H)
\end{align*}     
is satisfied on $\mathcal{X}$ and such that the following bound holds for all $\nu \in \mathcal{M}_W$ and all $t\geq 0$
\begin{align*}
\rho_W(\nu \mathscr{P}_t, \mu) \leq C e^{-\eta t} \rho_W(\nu, \mu).  
\end{align*}
\end{itemize}
\end{theorem}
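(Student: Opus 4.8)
The plan is to derive both parts of Theorem~\ref{thm:main} from the Meyn--Tweedie/Harris machinery, whose two inputs are a geometric Foster--Lyapunov drift inequality and a minorization condition on sublevel sets of the Lyapunov function. Concretely I would: (I) produce $W\in C^2(\mathcal X;(0,\infty))$ satisfying the sandwich bound of the statement together with a drift inequality $\mathscr L W\le -cW+K\mathbf 1_{\mathcal K}$ on $\mathcal X$ for some $c,K>0$ and some compact $\mathcal K\subset\mathcal X$ (this is the auxiliary Theorem~\ref{thm:lyapexist}); (II) establish a uniform minorization $\mathscr P_{t_0}(x,\cdot)\ge\alpha\,\nu_0(\cdot)$ for $x$ ranging over an arbitrary compact set; and (III) invoke Harris' theorem and identify the resulting unique invariant measure with $\mu$. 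I expect step (I), and in particular the regime $\xi\ll-1$ together with its compatibility with the singular-potential correction, to be by far the main obstacle; steps (II) and (III) are comparatively standard.

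For step (I) I would look for $W=\exp(\beta_0\Psi)$ with $\Psi=H+\Phi$, where $\Phi$ is a correction to the Hamiltonian~\eqref{eqn:Hdef} that is subdominant to $H$ at infinity, so that $\exp((\beta_0-\epsilon)H)\le W\le\exp((\beta_0+\epsilon)H)$ holds on $\mathcal X$ once $\beta_0|\Phi|\le\epsilon H$ there. Since $\mathscr L e^{\beta_0\Psi}=\beta_0 e^{\beta_0\Psi}\bigl(\mathscr L\Psi+\beta_0\gamma k_B T\sum_i|\nabla_{p_i}\Psi|^2\bigr)$, it is enough to arrange that $\mathscr L\Psi+\beta_0\gamma k_B T\sum_i|\nabla_{p_i}\Psi|^2\le -c$ outside a compact set; this is exactly where the threshold $\beta_*$ enters, both because $\beta_0$ must be small enough that the quadratic-in-$\beta_0$ gradient term cannot cancel the dissipation and because one needs $\beta_0+\epsilon<\beta$ so that $W\in L^1(\mu)$, i.e. $\mu\in\mathcal M_W$. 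From~\eqref{eqn:primitive}, $\mathscr L H$ already supplies the dissipative term $-\gamma\sum_i m_i^{-2}|p_i|^2$ controlling large $|p|$, and $\mathscr L H\le -cH$ when $\xi\gg 1$; the work is the regime $\xi\ll-1$, where (as noted after~\eqref{eqn:primitive}) $\mathscr L H$ is large and positive and dissipation can only be an averaging effect: a very negative $\xi$ destabilizes the momentum equation, so the kinetic energy surges, which in turn drives $\dot\xi$ upward. Following the heuristics of Section~\ref{sec:heuristics}, I would build $\Phi$ region-by-region (with smooth cutoffs in $\xi$, in $\|p\|_m^2$ and in $U$), coupling $\xi$ to the kinetic energy so that $\mathscr L\Phi$ extracts a genuinely dissipative contribution out of this destabilization-then-cooling cycle; additionally, to handle the regime $U\gg1$ with $p,\xi$ moderate (the Langevin regime) I would add a Herzog--Mattingly-type term, built from $\sum_i m_i^{-1}p_i\cdot\nabla_{q_i}U(q)$ times a suitable negative power of $|\nabla U(q)|$ and a cutoff in $U$, whose generator produces a negative multiple of a positive power of $|\nabla U|$; the Hessian terms $\nabla^2U$ generated along the way are exactly what Assumption (A4) with $\zeta\in(1,2)$ is designed to absorb. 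The final delicate point is the gluing: the cutoffs must be tuned so that the cross terms between regions and the $C^2$ patching remain subdominant to the dissipation each piece is paired with. This is the perturbative construction of~\cite{MSH_02,HerMat17}, and carrying it out in the present degenerate, thermostatted setting is the heart of the matter.

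For step (II), I would first check H\"ormander's bracket condition for~\eqref{eqn:main}: the noise acts through the $p_i$-directions, one bracket of $\partial_{p_i}$ with the drift reaches the $q_i$-directions, and because the $\xi$-equation depends genuinely quadratically on $p$, a second bracket reaches $\partial_\xi$; hence the drift and diffusion fields span the tangent space at every point of $\mathcal X$ (here (A3) gives the required smoothness of $U$ on $\mathcal O$), so the transition kernels have smooth densities. Next, approximate controllability on $\mathcal X$: the momentum is directly steerable, hence so is $q$ within $\mathcal O$ (using connectedness from (A1)), and $\xi$ can be driven upward (by making $\|p\|$ large) and downward (by setting $p=0$, when $\dot\xi=-k_B T kN/a$); the Stroock--Varadhan support theorem then gives that $\mathscr P_t(x,\cdot)$ has a strictly positive, jointly continuous density on $\mathcal X$ for every $t>0$, from which the uniform minorization over compact sets follows immediately.

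Finally, for step (III): since $W\ge\exp((\beta_0-\epsilon)H)$ and $H$ has compact sublevel sets by (A2) (it grows in $p$ and $\xi$ and the $\mathcal O_n$ are precompact), the sets $\{W\le R\}$ are compact, so steps (I) and (II) place us squarely in the hypotheses of Harris' theorem (see, e.g., \cite{MTIII}); this yields a unique invariant probability measure $\mu_*$ and constants $C,\eta>0$ with $\rho_W(\nu\mathscr P_t,\mu_*)\le Ce^{-\eta t}\rho_W(\nu,\mu_*)$ for all $\nu\in\mathcal M_W$ and $t\ge0$. It remains only to verify that the measure $\mu$ in~\eqref{eqn:Gibbsdef} is itself invariant, i.e. that $\mathscr L^{*}e^{-\beta H}=0$ on $\mathcal X$; this is a direct computation in which the Hamiltonian transport, the thermostat coupling between the $p$- and $\xi$-equations, and the Ornstein--Uhlenbeck part in $p$ cancel term by term (this is the stationarity established in \cite{Ding_14}). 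Uniqueness then forces $\mu_*=\mu$, which gives part (a), and part (b) is the displayed contraction with the constants relabelled.
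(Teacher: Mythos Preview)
Your overall architecture is exactly the paper's: Lyapunov function of the form $W=e^{\beta_0 H+\psi}$ built region-by-region (including the $\xi\ll-1$ averaging region and the Herzog--Mattingly correction near large $|\nabla U|$), H\"ormander plus a control argument for minorization on compacts, and then Harris with the identification $\mathscr L^*e^{-\beta H}=0$. So the plan is right.

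There is, however, a genuine gap in step (II). Your claim that ``$\mathscr P_t(x,\cdot)$ has a strictly positive, jointly continuous density on $\mathcal X$ for every $t>0$'' is false. Your own controllability remark shows why: with $p=0$ one has $\dot\xi=-k_B T kN/a$, which is the \emph{fastest} possible rate of decrease of $\xi$ since $\dot\xi\ge -k_B T kN/a$ always. Hence from $(q,p,\xi)$ one can never reach in time $t$ any point with $\xi'<\xi - t a^{-1}k_B T kN$; in fact the paper shows the exact support is the set $\mathcal A(x,t)=\{\xi'\ge \xi+(ta)^{-1}L_{q,q'}^2 - t a^{-1}k_B T kN\}$, the extra $L_{q,q'}^2$ term coming from Jensen applied to $\int_0^t\|P\|_m\,du$. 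So full-support controllability fails, and the minorization does not follow ``immediately.'' The paper's fix is to use a two-step argument via Chapman--Kolmogorov: given a compact $\mathcal C_R$ and $t_0>0$, choose a single target $y'$ with $\xi'$ large enough that $y'\in\mathcal A(x,t_0/2)$ for \emph{every} $x\in\mathcal C_R$ (this is possible precisely because $\xi$ can be driven arbitrarily far upward in any positive time), then pick $z'$ with $r_{t_0/2}(y',z')>0$ and use joint continuity of the density to get a uniform lower bound $\mathscr P_{t_0}(x,A)\ge c\,\lambda(A\cap B_\delta(z'))/\lambda(B_\delta(z'))$ for all $x\in\mathcal C_R$. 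With this correction your step (II) goes through and the rest of the argument is as you wrote it.
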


As we recall that $\xi$ is a fictitious control variable, consider now the marginal probability distributions $\bar{\mu}$ of $\mu$ and $\overline{\mathscr{P}}_t(x, \, \cdot\,)$ of $\mathscr{P}_t(x, \, \cdot \,)$, $x\in \mathcal{X}$, defined on the Borel sets $\mathcal{B}(\mathcal{Y})$ of $\mathcal{Y}:=\mathcal{O} \times (\R^k)^N$ by 
\begin{align*}
\bar{\mu} (A) &= \sqrt{\frac{2\pi}{\mathcal{Z}^2 a\beta}} \int_A e^{-\beta H(q,p,0)}\, dq \, dp \,\, \text{ and } \,\,\bar{\mathscr{P}}_t(x, A) &=\int_A  \int_{\R} \mathscr{P}_t(x, d\xi \, dq \, dp)    
\end{align*}
for $A\in \mathcal{B}(\mathcal{Y})$.  Note that $\bar{\mu}$ is the canonical Boltzmann-Gibbs measure
\begin{align}
\bar{\mu} (dq \, dp) =  \sqrt{\frac{2\pi}{\mathcal{Z}^2 a\beta}} \exp( - \beta( \|p\|_m^2/2 + U(q))) \, dq \, dp.  
\end{align}  
By combining Birkoff's ergodic theorem (see, for example, \cite{RB_06}) with Theorem~\ref{thm:main}, we immediately obtain the following corollary.   
\begin{corollary}
For every $f\in L^1(\bar{\mu})$:
\begin{align*}
\frac{1}{t}\int_0^t f(q(s), p(s)) \, ds \rightarrow  \int_\mathcal{Y} f(y) \, \bar{\mu}(dy)\,\, \text{ as } \,\, t\rightarrow \infty
\end{align*}  
where the convergence holds $\mu$-almost surely and in the $L^1(\mu)$-sense.  Furthermore, if $W\in C^2(\mathcal{X};(0, \infty))$ is any such function given in the conclusion of Theorem~\ref{thm:main} and $C, \eta>0$ are the associated constants in part \emph{(b)} of the result, then for all $t\geq 0$ and all $x \in \mathcal{X}$ we have
\begin{align}
\label{eqn:marginalbound}
\|\overline{\mathscr{P}}_t(x, \, \cdot\,) - \bar{\mu} \|_{TV} \leq  C e^{-\eta t} \sup_{\| \phi \|_W\leq 1} \bigg\{\phi(x)- \int_\mathcal{X} \phi(x') \mu (dx')  \bigg\} 
\end{align}   
where $\| \cdot \|_{TV}$ denotes the total variation distance.  
\end{corollary}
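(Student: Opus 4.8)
The plan is to deduce both assertions directly from Theorem~\ref{thm:main}; the argument is essentially bookkeeping, so I will only indicate the structure.

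First I would handle the ergodic-average statement. By part (a) of Theorem~\ref{thm:main}, $\mu$ is the \emph{unique} invariant probability measure for $x(t)$, and standard Markov process theory (see, e.g., \cite{RB_06}) then gives that the associated stationary process is ergodic, so that Birkhoff's ergodic theorem applies: for every $g\in L^1(\mu)$ one has $\tfrac1t\int_0^t g(x(s))\,ds\to\int_\mathcal{X} g\,d\mu$, $\mu$-almost surely and in $L^1(\mu)$. I would then apply this with $g=f\circ\pi$, where $\pi:\mathcal{X}\to\mathcal{Y}$, $(q,p,\xi)\mapsto(q,p)$, denotes the canonical projection. Since $\bar\mu=\mu\circ\pi^{-1}$ by construction, one has $\int_\mathcal{X}|g|\,d\mu=\int_\mathcal{Y}|f|\,d\bar\mu<\infty$, so $g\in L^1(\mu)$, and similarly $\int_\mathcal{X} g\,d\mu=\int_\mathcal{Y} f\,d\bar\mu$; as $g(x(s))=f(q(s),p(s))$, this is exactly the claimed limit.

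Next I would prove the total variation bound~\eqref{eqn:marginalbound}. Fix $x\in\mathcal{X}$ and let $W,C,\eta$ be as in Theorem~\ref{thm:main}(b). Since $\overline{\mathscr{P}}_t(x,\,\cdot\,)$ and $\bar\mu$ are the pushforwards under $\pi$ of $\mathscr{P}_t(x,\,\cdot\,)$ and $\mu$ respectively, for any measurable $\psi:\mathcal{Y}\to\R$ with $\|\psi\|_\infty\le1$ the function $\phi:=\psi\circ\pi$ satisfies $\|\phi\|_W\le\|\phi\|_\infty\le1$ (because $1+W\ge1$ on $\mathcal{X}$), whence
\begin{align*}
\int_\mathcal{Y}\psi\,\big(\overline{\mathscr{P}}_t(x,\,\cdot\,)-\bar\mu\big)(dy)=\int_\mathcal{X}\phi\,\big(\mathscr{P}_t(x,\,\cdot\,)-\mu\big)(dx')\le\rho_W(\delta_x\mathscr{P}_t,\mu).
\end{align*}
Taking the supremum over all such $\psi$ yields $\|\overline{\mathscr{P}}_t(x,\,\cdot\,)-\bar\mu\|_{TV}\le\rho_W(\delta_x\mathscr{P}_t,\mu)$, up to a harmless constant coming from the normalization convention for $\|\cdot\|_{TV}$ which can be absorbed into $C$. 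Because $W(x)<\infty$ we have $\delta_x\in\mathcal{M}_W$, so Theorem~\ref{thm:main}(b) applied to $\nu=\delta_x$ gives $\rho_W(\delta_x\mathscr{P}_t,\mu)\le Ce^{-\eta t}\rho_W(\delta_x,\mu)$, and unwinding the definition of $\rho_W$ shows $\rho_W(\delta_x,\mu)=\sup_{\|\phi\|_W\le1}\{\phi(x)-\int_\mathcal{X}\phi\,d\mu\}$, which is precisely the right-hand side of~\eqref{eqn:marginalbound}.

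I do not expect a genuine obstacle here; the only points that require a moment's care are the passage from unique invariance to ergodicity of the stationary process (needed so that Birkhoff returns the constant $\int_\mathcal{X} g\,d\mu$ rather than a nonconstant invariant function), and the elementary observation that pushing two measures forward onto a marginal cannot increase their total variation distance, which is what lets us control $\|\overline{\mathscr{P}}_t(x,\,\cdot\,)-\bar\mu\|_{TV}$ by $\rho_W$ of the joint laws.
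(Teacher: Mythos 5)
Your proposal is correct and follows essentially the same route the paper intends, which treats the corollary as an immediate consequence of Birkhoff's ergodic theorem (via uniqueness of $\mu$) together with Theorem~\ref{thm:main}(b) applied to $\nu=\delta_x$, using that marginal test functions $\psi\circ\pi$ with $\|\psi\|_\infty\le 1$ satisfy $\|\psi\circ\pi\|_W\le 1$. In fact no adjustment of the constant $C$ is needed at all, since the weighted norm is dominated by the sup norm regardless of the total variation convention, so your bound holds exactly as stated.
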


Before proceeding further, we make some remarks.  

\begin{remark}
\label{rem:dawson}
In addition to proving Theorem~\ref{thm:main}, we also provide an estimate on the parameter $\beta_*>0$ in the statement of the result which depends on the maximum, which we will denote by $D_\text{max}$, of \emph{Dawson's integral}.  Dawson's integral is a special function $D:\R\rightarrow \R$ defined by      
\begin{align}
D(z) = \exp(-z^2)\int_0^z \exp(y^2)\, dy
\end{align}
and arises in heat conduction and the theory of electrical oscillators~\cite{harris_48, hummer_62, lether_97, mccabe_74, sajo_93}.  From this point of view, perhaps it is not surprising that it would turn up in the setting of equation~\eqref{eqn:main}.  To understand why it is surprising from another perspective, later we will see that we can pick
\begin{align}
\label{eqn:betastar}
\beta_*=\frac{1}{8D_\text{max}^2}  \beta =  0.42701... \times \frac{1}{k_B T}  
\end{align}
and that, to the best of the author's knowledge, the argument given here does not allow for larger $\beta_*$. 
From working on other stochastic Hamiltonian dynamics, however, one expects to be able to choose $\beta_*= \beta=1/(k_BT)$ due to the asymptotic behavior of the probability density of the augmented Gibbs measure  $\mu$ as $H\rightarrow \infty$.  Thus from this perspective, there appears to be a discrepancy in the arguments of 
\begin{align*}
\beta-\frac{1}{8D_\text{max}^2}  \beta. 
\end{align*}   
The question hence becomes: Is $\beta_*$ given in~\eqref{eqn:betastar} optimal and if so, why is this the case?  In the theoretical arguments that identify the threshold, the behavior of the fictitious control variable $\xi$ in the region where $\xi \ll -1$ and $p$ is bounded plays a fundamental role.  Recall from~\eqref{eqn:primitive} that this is the region where the energy of the system is increasing pointwise.  We will see that $D$, hence $D_\text{max}$, is related to the exit distribution of the process from this ``bad" part of space, so one might conjecture that $D_\text{max}$ is a fundamental parameter governing the stability of the system.  Thus the given threshold may not be so surprising.                     
\end{remark}

\begin{remark}
One can improve the lefthand side of the inequality~\eqref{eqn:marginalbound} by replacing it with the weighted total variation distance
\begin{align}
\sup_{\substack{\phi:\mathcal{Y}\rightarrow \R\\\| \phi\|_{W} \leq 1}} \int_{\mathcal{Y}} \phi(y) (\overline{\mathscr{P}}_t(x, dy) -  \bar{\mu}(dy))
\end{align}
where the supremum is taken over all measurable $\phi: \mathcal{Y}\rightarrow \R$ satisfying $$\sup_{(q,p, \xi) \in \mathcal{X}} \frac{|\phi(q,p)|}{1+W(q,p, \xi)}\leq 1.$$  Also note that the righthand side of~\eqref{eqn:marginalbound} can be made more explicit by bounding it above by
\begin{align}
C e^{-\eta t} \bigg\{ 2 +W(x) + \int_\mathcal{X} W(x') \mu(dx') \bigg\}. 
\end{align} 
\end{remark}

The proof of Theorem~\ref{thm:main} splits into two parts: the existence of the appropriate Lyapunov function $W$ to ensure regular and sufficiently fast return times to a compact set in $\mathcal{X}$, as stated in Theorem~\ref{thm:lyapexist} below, and the necessary support and regularity properties of the Markov transitions outlined in Proposition~\ref{prop:suppreg}.      
\begin{theorem}
\label{thm:lyapexist}
Recall that $\mathscr{L}$ denotes the generator of the Markov process $x(t)$.  Let $\epsilon >0$ and fix $\beta_0 >0$ satisfying
\begin{align}
\label{eqn:beta0bound}
\beta_0< \beta_*:=\frac{\beta}{8 D_\text{\emph{max}}^2} .  
\end{align}
Then there exists $W\in C^2 (\mathcal{X})$ and constants $ \alpha, K>0$ such that the following two estimates hold on $\mathcal{X}$
\begin{align}
\tag{I}
\exp((\beta_0 -\epsilon) H) \leq W \leq \exp( (\beta_0 +\epsilon)H),  
\end{align}
\begin{align}
\tag{II}
\mathscr{L} W \leq - \alpha W + K.    
\end{align}
\end{theorem}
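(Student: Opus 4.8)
The plan is to build $W$ as an exponential tilt of the Hamiltonian, $W = \exp(\Phi)$ with $\Phi$ roughly $\beta_0 H$ plus a correction that kills the bad sign of $\mathscr L H$ in the region $\xi \ll -1$, $\|p\|_m$ bounded. Writing $\mathscr L W = W\big(\mathscr L \Phi + \tfrac12 |\sigma^* \nabla \Phi|^2\big)$, where $\sigma$ is the diffusion matrix (only the $p$-block is nonzero, with coefficient $\sqrt{2\gamma k_B T}$), we need the bracket to be bounded above by $-\alpha + K e^{-\Phi}$, i.e. negative outside a compact set. For the leading term $\beta_0 H$ one computes, using \eqref{eqn:primitive} and the Gaussian structure in $p$, that the drift contribution is $\beta_0(-\xi k_B T kN - \gamma\|p\|_{m^2}^2 + \sum \gamma k_B T/m_i)$ and the Itô-square contribution is $\beta_0^2 \gamma k_B T \|p\|_{m^{3/2}}^2$-type term (coming from $|\sqrt{2\gamma k_B T}\,\nabla_p(\beta_0 H)|^2 = 2\gamma k_B T\,\beta_0^2 \|p\|_{m^2}^2$). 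Choosing $\beta_0 < \beta = 1/(k_B T)$ makes the net $p$-quadratic coefficient $-\beta_0\gamma\|p\|_{m^2}^2(1 - \beta_0 k_B T)<0$ strictly negative, so $\Phi = \beta_0 H$ alone already controls the dynamics wherever $\|p\|_m$ is large enough or $\xi$ is bounded. The entire difficulty is the wedge $\mathcal W = \{\xi \le -c,\ \|p\|_m \le R\}$ where the $-\beta_0\xi k_B T kN$ term is large and positive and the $p$-dissipation is too weak to absorb it.

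The key device, following \cite{MSH_02,cooke17:_geomet_ergod_two,HerMat17} and as flagged in Section~\ref{sec:heuristics} and Remark~\ref{rem:dawson}, is to add to $\Phi$ a bounded, smooth corrective term $\psi(q,p,\xi)$, supported essentially on $\mathcal W$, which exploits the deterministic rotation between $p$ and $\xi$: in $\mathcal W$, $\dot\xi \approx -k_B T kN/a < 0$ pushes $\xi$ further negative but $\dot p \approx -\xi p$ is then a strong expansion of $\|p\|_m$, which drives the trajectory out of $\{\|p\|_m \le R\}$ on a time scale that lets the $p$-dissipation act. Concretely I would take $\psi$ of the form $\chi(\xi)\,g(\|p\|_m^2)$ or, more sharply, a term linear in a rescaled momentum variable times a cutoff in $\xi$, engineered so that $\mathscr L \psi$ contributes a strictly negative quantity proportional to $|\xi|$ on $\mathcal W$ that beats $\beta_0 k_B T kN |\xi|$. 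This is exactly where Dawson's integral enters: solving the associated one-dimensional (in $p$ after freezing slow variables) stationary/transport equation that characterizes the optimal corrector leads to $D(z)=e^{-z^2}\int_0^z e^{y^2}dy$, and the requirement that the corrector remain bounded while dominating the $|\xi|$ growth forces $\beta_0 < \beta/(8 D_{\max}^2)$; with $\beta_0$ strictly below this threshold there is slack $\delta>0$ to close the inequality.

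With $\Phi = \beta_0 H + \psi$ and $\psi$ bounded, estimate (I) is immediate by choosing the $C^0$-bound on $\psi$ smaller than $\epsilon H$ outside a compact set and adjusting on the compact set (one may also scale $\psi$ by a small constant, since only its sign and size relative to $|\xi|$ matter, trading a smaller threshold-slack for a smaller $\|\psi\|_\infty$; this is harmless as long as we stay below $\beta_*$). For estimate (II) I would partition $\mathcal X$ into (i) $\|p\|_m$ large, (ii) $\|p\|_m$ bounded and $\xi \ge -c$, and (iii) the wedge $\mathcal W$; on (i) the negative $p$-quadratic term from $\beta_0 H$ dominates all error terms including those from $\psi$ and its derivatives (here (A4) on $U$ controls $\nabla U$ and $\nabla^2 U$ contributions, ensuring $\mathscr L\psi$ and the cross terms $\nabla\psi\cdot\nabla H$ do not overwhelm the dissipation — this is the one place the normal-potential hypothesis is genuinely used), on (ii) $H\to\infty$ forces $U(q)\to\infty$ and again $\nabla U$ grows, giving $\mathscr L H$ strongly negative, and on (iii) the corrector $\psi$ supplies the missing negativity. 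Patching these with a smooth partition of unity and absorbing the finitely many bounded cross-region error terms into the constant $K$ yields $\mathscr L W \le -\alpha W + K$. The main obstacle is the construction and verification of $\psi$ on the wedge $\mathcal W$: getting a genuinely bounded corrector whose generator dominates $\beta_0 k_B T kN|\xi|$ is precisely what pins the constant $\beta_*$ to $\beta/(8D_{\max}^2)$, and making the derivative (gradient and Hessian) error terms of $\psi$ subordinate to the available dissipation near the boundary of $\mathcal W$ — where the cutoffs have large derivatives — requires a careful choice of the cutoff scales.
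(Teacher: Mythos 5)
Your overall architecture (exponential tilt $W=e^{\beta_0 H+\psi}$, reduction to $\mathscr L\Phi+\tfrac{\gamma}{\beta}|\nabla_p\Phi|^2\le-\alpha$ off a compact set, a Dawson-integral corrector in the wedge $\xi\ll-1$, $|p|$ controlled) matches the paper, but your handling of the remaining regions contains a genuine error. In your region (ii) you claim that when $\|p\|_m$ and $\xi$ are bounded and $H\to\infty$ (so $U\to\infty$), the growth of $\nabla U$ makes $\mathscr L H$ ``strongly negative.'' It does not: by the Hamiltonian cancellation, $\mathscr L H=-\xi k_BT kN-\gamma\sum_i|p_i|^2/m_i^2+\sum_i\gamma k_BT/m_i$ contains no $\nabla U$ term at all (see~\eqref{eqn:primitive}), so on this region $\mathscr L(\beta_0 H)$ is merely bounded and can be positive; no choice of $\alpha>0$ works there without a further correction. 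This is precisely why the paper introduces a second perturbation $\psi_1\propto\sqrt{\xi^2+1}\,p\cdot\nabla U/|\nabla U|^2$ (solving $-\nabla U\cdot\nabla_p\psi=-\alpha_1\sqrt{\xi^2+1}$ on the set where $|\nabla U|$ is large), and it is in estimating $\mathscr L\psi_1$ and $\mathscr L\psi_2$ — the terms involving $\nabla G$ and $|\nabla^2U|/|\nabla U|^\zeta$ — that the normal-potential condition (A4) is actually used, not in your region (i).

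A second, related gap is your decomposition by a fixed momentum radius $R$: in the intermediate regime where $\|p\|_m$ is large but $\|p\|_m^2\ll|\xi|$ (e.g. $\|p\|_m^2\sim|\xi|^{1/2}$, $\xi\to-\infty$), the dissipation $-c\|p\|_m^2$ from $\beta_0 H$ cannot absorb the positive term $\beta_0 k_BT kN|\xi|$, yet this regime lies neither in your wedge $\{\|p\|_m\le R,\ \xi\le-c\}$ nor is it controlled by region (i). The paper avoids this by using $\xi$-dependent cutoffs ($|p|^2\lessgtr p_*\sqrt{\xi^2+1}$) and by adding the auxiliary perturbation $\psi_0=\delta f_0(\xi)a\xi^2/2$, whose generator supplies the extra dissipation $-\delta f_0|\xi|\,\|p\|_m^2$ that absorbs both this regime and the cutoff/remainder terms produced by $\psi_1,\psi_2$. (Relatedly, the true correctors are not bounded: $\psi_2$ grows like $\log|\xi|$ on its support and $\psi_0$ quadratically in $\xi$; only the bound $|\psi_0+\psi_1+\psi_2|\le\epsilon H$ is needed for (I). And the threshold $\beta_*=\beta/(8D_{\max}^2)$ arises not from a boundedness requirement but from balancing the It\^o-square term $\frac{\gamma}{\beta}|\nabla_p\psi_2|^2\approx 2kN\alpha_2^2D_{\max}^2|\xi|$ against the gain $-\alpha_2 kN|\xi|$, optimized at $\alpha_2=1/(4D_{\max}^2)$.) To repair the proposal you would need to add the $\psi_1$-type corrector for the large-$|\nabla U|$ region and replace the fixed-$R$ partition by the $\xi$-scaled one, essentially reproducing the paper's three-perturbation construction.
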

The proof of Theorem~\ref{thm:lyapexist} is given in Section~\ref{sec:lyap} and is motivated by the heuristics in Section~\ref{sec:heuristics}.

Fundamental to the supports of the Markov transitions is the notion of the \emph{arc length} $L_\gamma$ of a curve $\gamma \in C^1([0,1]; \mathcal{O})$ with respect to the norm $\| \cdot \|_m$ defined by   
\begin{align*}
L_\gamma = \int_0^1 \|\dot{\gamma}(t)\|_m \, dt.  
\end{align*}  
Since $\mathcal{O}$ is a connected subset of Euclidean space, $\mathcal{O}$ is path connected.  Hence this notion of arc length allows us to define the $\mathcal{O}$-\emph{distance} $L_{q,q'}$ between $q, q'\in \mathcal{O}$ by  
\begin{align*}
L_{q,q'} = \inf \{L_\gamma\, : \, \gamma \in C^1([0,1]; \mathcal{O}), \, \gamma(0)=q, \gamma(1)=q'\}.  
\end{align*} 
Clearly if $\mathcal{O}=(\R^k)^N$, then $L_{q,q'}= \|q-q'\|_m$.  However, if $\mathcal{O}\subsetneq (\R^k)^N$ as in the case of a potential with singularities, then the shortest distance between $q, q' \in \mathcal{O}$ in the norm $\|\cdot \|_m$ is $L_{q,q'}$.  For $x=(q, p, \xi) \in \mathcal{X}$ and $t>0$, define 
\begin{align}
\label{eqn:suppset}
\mathcal{A}(x, t)= \big\{ (q', p', \xi') \in \mathcal{X} \,  : \,  \xi' \geq \xi  + (ta)^{-1} L^2_{q,q'}- t a^{-1} k_B TkN\big\}.  
\end{align}
We have the following.
\begin{proposition}
~~~
\label{prop:suppreg}
\begin{itemize}
\item[(i)]  For each $x\in \mathcal{X}$ and $t>0$, the measure $\mathscr{P}_t(x, \, \cdot \,)$ is absolutely continuous with respect to Lebesgue measure on $\mathcal{X}$.  Denoting the probability density of $\mathscr{P}_t(x, \, \cdot\,)$ by $r_t(x,y)$, the mapping $(t,x,y) \mapsto r_t(x,y) : (0, \infty) \times \mathcal{X} \times \mathcal{X}\rightarrow [0, \infty)$ is continuous.
\item[(ii)]  For any $x\in \mathcal{X}$ and $t>0$
\begin{align*}
\supp \mathscr{P}_t(x, \, \cdot \,) = \mathcal{A}(x,t)
\end{align*}
where $\supp \nu$ denotes the support of the measure $\nu$ and $\mathcal{A}(x,t)$ is as in~\eqref{eqn:suppset}.    
\end{itemize}  
\end{proposition}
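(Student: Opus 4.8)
The plan is to treat the two parts with the two standard tools for degenerate diffusions: H\"ormander's hypoellipticity theorem for (i), and the Stroock--Varadhan support theorem together with an explicit analysis of the control system associated to \eqref{eqn:main} for (ii). The only mild complication is that $\mathcal{X}$ is a proper open subset of Euclidean space when $U$ is singular; this is handled by routine localization (exhaust $\mathcal{X}$ by the sets $\mathcal{X}_n$ of \eqref{eqn:Xn}, apply the classical statements on each, pass to the limit using the non-explosion of Lemma~\ref{lem:exist}).

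For part (i), I would write \eqref{eqn:main} as $dx = X_0(x)\,dt + \sqrt{2\gamma k_B T}\sum_i \partial_{p_i}\cdot dB_i$ with constant diffusion fields (so Stratonovich and It\^o drifts agree and $X_0$ is simply the drift of \eqref{eqn:main}), and verify H\"ormander's bracket condition at every point of $\mathcal{X}$. The $\partial_{p_i}$ span the momentum directions; the brackets $[X_0,\partial_{p_i}]$, which see the terms $\sum_i \frac{p_i}{m_i}\cdot\partial_{q_i}$ and $\frac1a\|p\|_m^2\,\partial_\xi$ of the drift, produce (modulo the momentum directions) the configuration directions $\partial_{q_i}$ up to a $p$-dependent multiple of $\partial_\xi$; one further bracket with $\partial_{p_i}$ strips that multiple off and yields a nonzero constant multiple of $\partial_\xi$, hence $\partial_\xi$ and then all $\partial_{q_i}$ lie in the Lie algebra. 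So H\"ormander's condition holds everywhere, whence $\partial_t-\mathscr{L}$ and the corresponding forward operator are hypoelliptic on $(0,\infty)\times\mathcal{X}$; since $U\in C^\infty(\mathcal{O})$ by (A3) all coefficients are smooth on $\mathcal{X}$, and the standard argument (solving the backward and forward equations against test functions and combining them via Chapman--Kolmogorov; cf.\ \cite{MSH_02, HerMat17} and the references therein) gives a density $r_t(x,y)$ that is $C^\infty$, in particular jointly continuous, in $(t,x,y)$ on $(0,\infty)\times\mathcal{X}\times\mathcal{X}$.

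For part (ii), the inclusion $\supp\mathscr{P}_t(x,\cdot)\subseteq\mathcal{A}(x,t)$ I would obtain pathwise, without the support theorem: since the $q$- and $\xi$-equations of \eqref{eqn:main} carry no noise, almost every solution has $q\in C^1$ with $\dot q_i=p_i/m_i$ and satisfies
\[
\xi(t)=\xi+\frac1a\int_0^t\|p(s)\|_m^2\,ds-\frac ta k_B TkN .
\]
The integrand $\|p(s)\|_m^2=\sum_i m_i|\dot q_i(s)|^2$ is the squared $\|\cdot\|_m$-speed of the $\mathcal{O}$-valued curve $q(\cdot)$, so Cauchy--Schwarz gives $\int_0^t\|p(s)\|_m^2\,ds\geq t^{-1}\big(\int_0^t\|\dot q(s)\|_m\,ds\big)^2\geq t^{-1}L_{q,q(t)}^2$, i.e.\ $x(t)\in\mathcal{A}(x,t)$ a.s.; since $q'\mapsto L_{q,q'}$ is continuous on $\mathcal{O}$ (locally it is comparable to Euclidean distance, using that $\mathcal{O}$ is open and connected), $\mathcal{A}(x,t)$ is closed and the inclusion follows. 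For the reverse inclusion I would invoke the Stroock--Varadhan support theorem, which identifies $\supp\mathscr{P}_t(x,\cdot)$ with the closure of the set of points reachable at time $t$ by the control system obtained from \eqref{eqn:main} on replacing $dB_i$ by $\dot u_i\,dt$, $u\in C^\infty$. Because the control enters the $p$-equation through the invertible matrix $\sqrt{2\gamma k_B T}\,I$, choosing a $C^\infty$ momentum path $p(\cdot)$ with $p(0)=p$ determines $q(\cdot)$ and $\xi(\cdot)$ by integration and then a $C^\infty$ control by solving the $p$-equation for $\dot u$; hence $(q',p',\xi')$ is reachable provided one can pick $C^\infty$ $p:[0,t]\to(\R^k)^N$ with $p(0)=p$, $p(t)=p'$, with $q(s):=q+\int_0^s p/m$ staying in $\mathcal{O}$ and $q(t)=q'$, and with $\int_0^t\|p(s)\|_m^2\,ds=a(\xi'-\xi)+tk_B TkN=:E$. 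Every point of $\mathcal{A}(x,t)$ has $E\geq t^{-1}L_{q,q'}^2$; since $\supp\mathscr{P}_t(x,\cdot)$ is closed and the points $(q',p',\xi'+\delta)$, $\delta>0$, lie in $\mathcal{A}(x,t)$ and converge to a given boundary point, it suffices to treat the strict case $E>t^{-1}L_{q,q'}^2$. There I would take a $C^\infty$ curve $\gamma$ from $q$ to $q'$ in $\mathcal{O}$ with $\dot\gamma(0)=p/m$, $\dot\gamma(t)=p'/m$, built by traversing a curve that is nearly a $\|\cdot\|_m$-geodesic at almost constant speed on $[\delta,t-\delta]$ and absorbing the velocity matching into short arcs near the endpoints, so that $\int_0^t\|\dot\gamma\|_m^2$ is as close to $t^{-1}L_{q,q'}^2$ as desired, in particular $<E$; then I would inflate the action to exactly $E$ by adding a compactly supported, high-frequency, small-amplitude oscillation $\gamma(s)+\frac\lambda n\sin(ns)\psi(s)w$ with $\psi\in C_c^\infty((0,t))$, whose contribution to $\int\|\dot\gamma\|_m^2$ is $\approx\frac{\lambda^2}{2}\|w\|_m^2\int\psi^2$ to leading order in $n$ and so sweeps $[0,\infty)$ continuously in $\lambda$ (intermediate value theorem), while for $n$ large the perturbation is $C^0$-small, keeping $\gamma$ in $\mathcal{O}$ and preserving the endpoints and endpoint velocities. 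Setting $p_i(s)=m_i\dot\gamma_i(s)$ produces the required momentum path, so $(q',p',\xi')$ is reachable; combining the two inclusions gives $\supp\mathscr{P}_t(x,\cdot)=\mathcal{A}(x,t)$.

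The hard part is this last construction: one must simultaneously hit the spatial endpoint $q'$, both momentum endpoints, and the exact value $E$ of the quadratic functional $\int\|p\|_m^2$, all while keeping the configuration curve inside the possibly proper domain $\mathcal{O}$. The geodesic-approximation-plus-oscillation scheme is what reconciles these constraints, and the delicate checks are that the oscillatory correction genuinely attains the prescribed action value and does not push the curve out of $\mathcal{O}$ or disturb the endpoint data. Everything else --- the bracket computation in (i) and the Cauchy--Schwarz inequality in the easy inclusion of (ii) --- is routine.
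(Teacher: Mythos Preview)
Your proposal is correct. Part~(i) is exactly the paper's argument: the same bracket computation $\partial_{p_i^\ell}$, $[X_0,\partial_{p_i^\ell}]$, $[\partial_{p_i^\ell},[X_0,\partial_{p_i^\ell}]]$ yields $\partial_\xi$ and then all of $T\mathcal{X}$, and hypoellipticity of $\partial_t\pm\mathscr{L}$, $\partial_t\pm\mathscr{L}^*$ gives the jointly smooth density. For the easy inclusion in~(ii) you use the same Cauchy--Schwarz/Jensen bound on $\int_0^t\|p\|_m^2$; you apply it pathwise to the SDE (legitimate since $q,\xi$ carry no noise and are a.s.\ $C^1$), while the paper applies it to trajectories of the control system after invoking Stroock--Varadhan.

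The genuine difference is in the construction for the hard inclusion. The paper first \emph{localizes}: it restricts to targets with $q'\in B_\epsilon(q)\subseteq\mathcal{O}$ so that $L_{q,q'}=\|q-q'\|_m$, then builds explicit \emph{piecewise linear} configuration paths $\phi_\delta$ (linear near the endpoints with the prescribed velocities, linear interpolation in between) whose action tends to $t^{-1}\|q-q'\|_m^2$ as $\delta\to 0$, and reaches larger $\xi'$ by a ``run-fast-then-park'' modification $\phi_\delta^s$ (traverse the interpolation on $[\delta,s-\delta]$ and sit still on $[s-\delta,t-\delta]$), invoking the intermediate value theorem in $s$. Your scheme instead works \emph{globally in $\mathcal{O}$}: take a smooth near-geodesic with matched endpoint velocities to make the action close to $t^{-1}L_{q,q'}^2$, and inflate the action to the prescribed value by a small-amplitude, high-frequency oscillation, again via the intermediate value theorem. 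The paper's construction is more elementary and completely explicit, but its passage from $q'$ near $q$ to arbitrary $q'\in\mathcal{O}$ is left implicit; your construction handles general $q'$ directly at the cost of a slightly more analytic argument (approximate geodesics, oscillatory perturbations, and the checks you flag that the perturbation keeps the curve in $\mathcal{O}$ and does not disturb the endpoint data). Either the time-reparametrization or the oscillation device suffices to sweep the action continuously over $[t^{-1}L_{q,q'}^2,\infty)$, so both arguments close in the same way.
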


We will see in the Appendix that Theorem~\ref{thm:lyapexist} and Proposition~\ref{prop:suppreg} together imply Theorem~\ref{thm:main}.  This implication follows from Theorem 1.2 of \cite{HM_08}.  In essence, one has to translate the conclusions of Theorem~\ref{thm:lyapexist} and Proposition~\ref{prop:suppreg} to, respectively, Assumption~1 and Assumption~2 of~\cite{HM_08} to an embedded Markov chain.  Thus Theorem 1 of~\cite{HM_08} can be applied to that chain and then one has to translate the bound obtained back to one for the original process, hence producing Theorem~\ref{thm:main}.  Also, while not entirely nontrivial, the proof of Proposition~\ref{prop:suppreg} relies on by now standard methods to produce the necessary smoothing of the Markov semigroup via H\"{o}rmander's theorem~\cite{Hor_67} and support properties of the associated transitions via the support theorems~\cite{SV_72, SV_73}.  This proof will be given in Section~\ref{sec:sup_con}.  The main innovative content of the document is the proof of Theorem~\ref{thm:lyapexist}.

\section{Hueristics}
\label{sec:heuristics}
As emphasized at the end of the previous section, the main difficulty in proving Theorem~\ref{thm:main} is the absence of a natural Lyapunov function.  This is clearly evident in equation~\eqref{eqn:primitive} which shows that $\mathscr{L} H$ is positive and large in the region where $\xi \ll-1$ and $|p|$ is bounded.  Thus if we expect dissipation in the system at large energies, then it must be due to averaging effects not captured by the pointwise equality~\eqref{eqn:primitive}.  As in the works~\cite{cooke17:_geomet_ergod_two, HerMat17}, we will look for a small perturbation $\psi$ of the Hamiltonian $H$ that encapsulates these effects.  The construction of such a perturbation uses a slight modification of the procedure developed in~\cite{AKM12, HerMat15}.      

\subsection{A simplifying ansatz}
Guided by the behavior of the density of $\mu$ in~\eqref{eqn:Gibbsdef} as $H\rightarrow \infty$, fixing $\epsilon >0$ and $\beta_0 >0$ small enough we look for a function $\psi \in C^2(\mathcal{X})$ such that 
\begin{align}
\label{eqn:Wform}
W(x)= \exp( \beta_0 H(x) + \psi(x))
\end{align}  
satisfies conclusions (I) and (II) of Theorem~\ref{thm:lyapexist}.  First note that if $V= \beta_0 H + \psi$, then applying the generator $\mathscr{L}$ to $W$ of the form~\eqref{eqn:Wform} produces   
\begin{align*}
\mathscr{L}W &= W\{  \mathscr{L}V + \gamma k_B T |\nabla_p V|^2\}.     
\end{align*}  
Thus for this choice of $W$, conclusion (II) of Theorem~\ref{thm:lyapexist} holds if and only if the following bound holds
\begin{align}
\label{eqn:Vbound1}
\mathscr{L} V + \gamma k_B T |\nabla_p V|^2 \leq - \alpha + K \textbf{1}_A
\end{align} 
on $\mathcal{X}$ for some $\alpha, K>0$ and $A\subseteq \mathcal{X}$ compact.  Note clearly that if the property given in~\eqref{eqn:Vbound1} holds, then 
\begin{align}
\label{eqn:Vbound2}
\mathscr{L}V \leq - \alpha + K \textbf{1}_A
\end{align} 
holds on $\mathcal{X}$ for the same choice of $\alpha, K >0$ and $A\subseteq \mathcal{X}$ compact.  What is surprising is that the reverse implication is often true as well.  In other words, by slightly tweaking $V$ satisfying~\eqref{eqn:Vbound2} we can also obtain the stronger property~\eqref{eqn:Vbound1}, but perhaps for a different choice of $\alpha, K, A$.  

The reverse implication can be intuited by scaling $V$ by a small constant $\epsilon>0$; that is, if one sets $V_\epsilon= \epsilon V$, then the gradient term in~\eqref{eqn:Vbound1} is of order $\epsilon^2$ while the $\mathscr{L}V_\epsilon$ is of order $\epsilon$.  This also seems plausible when one considers the case when $\psi=0$, so $V= \beta_0 H$ and hence
\begin{align*}
\mathscr{L} V + \gamma k_B T |\nabla_p V|^2 &= \beta_0 \mathscr{L}H + \beta_0^2 \gamma k_B T |\nabla_p H|^2 \\
&= - \gamma\beta_0 (1-\beta_0/\beta) \sum_{i=1}^N \frac{|p_i|^2}{m_i^2} - \beta_0 \xi k_B TkN + \beta_0 \sum_{i=1}^N \frac{\gamma k_B T}{m_i} .  
\end{align*}      
Note that as $\beta_0 \searrow 0$, after consulting~\eqref{eqn:primitive} we see that the expression on the righthand side above is asymptotically equal to $\beta_0 \mathscr{L}V$.  

We these simple observations in mind, we are now squarely concerned with finding a perturbation $\psi$ so that $V=\beta_0 H + \psi$ satisfies the presumably weaker condition~\eqref{eqn:Vbound2}.

\subsection{Subsolutions of $\mathscr{L}V= -\alpha$ for $H\gg 1$}
Employing our ansatz, fixing $\alpha >0$ the goal is to now find subsolutions of the PDE
\begin{align}
\label{eqn:LVa}
\mathscr{L}V = - \alpha \,\, \text{ on } \,\, H \geq R
\end{align}  
of the form $V= \beta_0 H + \psi$ where $\psi$ is the unknown and $R>0$ is large.  Of course, we also want to be able to ``tune" the perturbation $\psi$ so that for any $\epsilon >0$ we can construct $\psi=\psi_\epsilon$ to be a subsolution of~\eqref{eqn:LVa} with the additional property that $|\psi| \leq \epsilon H$ on $\mathcal{X}$.  As we will see below, however, this additional property is a simple consequence of the structure of subsolutions.     

We first consult equation~\eqref{eqn:primitive} to see that in the region 
\begin{align*}
 \mathcal{R}_0= \big\{ (q,p, \xi) \in \mathcal{X} \, : \, \xi \geq K_*\,\, \text{ or }\,\, |p|^2 \geq p_* \sqrt{\xi^2 +1}\big\}
 \end{align*}
for $K_*, p_*>0$ sufficiently large we have that 
\begin{align*}
\mathscr{L}(\beta_0 H) = \beta_0 \mathscr{L} H \leq -\alpha.      
\end{align*}    
Thus we need not perturb off of the Hamiltonian to have the desired effect in this region.  In other words, we should set 
\begin{align}
\psi=0 \,\, \text{ on } \,\, \mathcal{R}_0
\end{align}
and restrict our analysis of the problem~\eqref{eqn:LVa} to the complement
\begin{align*}
\mathcal{R}_0^c= \big\{ (q,p, \xi) \in \mathcal{X}\, : \, \xi \leq K_*\,\, \text{ and } \,\, |p|^2 \leq p_* \sqrt{\xi^2 +1}\big\}.  
\end{align*}

To help reduce the difficulty in finding subsolutions of~\eqref{eqn:LVa} on $\mathcal{R}_0^c$, similar to the works~\cite{cooke17:_geomet_ergod_two, HerMat17} we will study the dynamics at large energies.  This will be done by using formal scaling analysis applied to the infinitesimal generator 
\begin{align}
\label{eqn:generator}
\mathscr{L} &= \sum_{i=1}^N \frac{p_i}{m_i}\cdot \nabla_{q_i} - \sum_{i=1}^N \big( \xi + \frac{\gamma}{m_i}\big) p_i \cdot \nabla_{p_i}   - \nabla U \cdot \nabla_p \\
\nonumber &\qquad + a^{-1}(\|p\|_m^2 - kN/\beta) \partial_\xi   + \frac{\gamma}{\beta} \Delta_p.  
\end{align}    
This analysis allows us to at least heuristically justify neglecting terms in~\eqref{eqn:generator} when solving~\eqref{eqn:LVa} on $\mathcal{R}_0^c$.  For simplicity, we restrict our discussion in this section to the case when $N=k=m_1=a=k_BT=1$, so that 
\begin{align}
\label{eqn:Lrestr}
\mathscr{L}= p \partial_q - (\xi + \gamma) p \partial_p - U' \partial_p + (p^2 - 1) \partial_\xi  + \gamma  \partial_p^2.   
\end{align}  
Furthermore, to make the scaling analysis that follows more explicit, we will assume that our potential $U: \R\rightarrow [0, \infty)$ is a polynomial of degree $\ell\geq 2$.  Note that such a potential satisfies the hypotheses made at the beginning of Section~\ref{sec:notation}.     
  
We start by considering the subregion $\mathcal{R}_1'$ in $\mathcal{R}_0^c$ where $\xi$ is assumed to be bounded.  In particular, $|p|$ must also be bounded in $\mathcal{R}_1'$.  Consequently, as $H\rightarrow \infty$ in $\mathcal{R}_1'$, $U\rightarrow \infty$ while $\xi$ and $|p|$ remain bounded.  Thus making the substitution $q=\lambda Q$ for $\lambda \gg1$ and not changing $\xi$ and $p$ in the expression~\eqref{eqn:Lrestr} produces 
\begin{align*}
\mathscr{L} = -\lambda^{\ell-1} U'_L(Q) \partial_p + r(Q, p, \xi, \lambda) 
\end{align*}  
where $U_L'$ is the leading order term in $U'$ and the remainder term $r$ is $o(\lambda^{\ell-1})$ as $\lambda \rightarrow \infty$.  Since $\ell \geq 2$, this argument suggests that in the region $\mathcal{R}_0^c$ where $\xi$ is bounded
\begin{align}\label{eqn:approx1}
\mathscr{L} \approx - U'(q) \partial_p.  
\end{align}  
A nearly identical scaling argument yields the same approximation~\eqref{eqn:approx1} in the region
\begin{align}
\label{eqn:R1def}
\mathcal{R}_1 = \mathcal{R}_0^c \cap \big\{ (q,p, \xi) \in \mathcal{X} \, : \, |U'(q)| \geq U_* ( \xi^6 + 1)^{1/4}\big\}
\end{align} 
for $U_* >0 $ large enough.  Note that the set $\mathcal{R}_1$ subsumes the region in $\mathcal{R}_1'$ where $H$ is large.  Also, the power $(\xi^6)^{1/4}= |\xi|$ in~\eqref{eqn:R1def} above balances the term $-(\xi  + \gamma) p$ on the boundary of $\mathcal{R}_0^c$ where $|p|^2 = p_* \sqrt{\xi^2 + 1}$, so when $U_*>0$ is large enough, the $-U' \partial_p$ term still dominates.  

Translating this scaling analysis back to solving the problem~\eqref{eqn:LVa} in the region $\mathcal{R}_1$, we consider solving the equation
\begin{align}
\label{eqn:approx1psi}
- U' \partial_p \psi =  - \alpha_1 \sqrt{\xi^2+ 1}  
\end{align}  
for some constant $\alpha_1 >0$.  Note that the righthand side of~\eqref{eqn:approx1psi} is natural since in $\mathcal{R}_0^c$
\begin{align}
\label{eqn:Lbehaviorpsmall}
|\mathscr{L} (\beta_0H)|\leq C\sqrt{\xi^2 +1}
\end{align}
for some $C>0$.  Thus for $\alpha_1 >C$, the solution $\psi$ is designed to counteract any ``bad" parts in $\beta_0\mathscr{L} H$ arising in the region $\mathcal{R}_1$.   Observe that in $\mathcal{R}_1$, equation~\eqref{eqn:approx1psi} clearly has a particular solution defined by 
\begin{align}
\label{eqn:approx1psi1}
\psi(q,p, \xi) = \frac{\alpha_1 p \sqrt{\xi^2 + 1}}{U'},  
\end{align}
thus giving a natural choice for the perturbation $\psi$ in $\mathcal{R}_1$.  Note that this choice of $\psi$ can be made arbitrarily small on $\mathcal{R}_1$ by choosing $U_*>0$ large enough.

We next turn our attention to the region 
\begin{align}
\mathcal{R}_2 = \mathcal{R}_0^c \cap \mathcal{R}_1^c = \big\{ (q, p, \xi) \in \mathcal{X} \, : \, \xi \leq K_*, \, |p|^2 \leq p_*\sqrt{\xi^2 +1}, \, |U'| \leq U_*(\xi^6 + 1)^{1/4}\big\}.    
\end{align}     
In $\mathcal{R}_2$, any route to infinite energy must have $\xi\rightarrow -\infty$.  It is with this fact and how the boundaries in $\mathcal{R}_2$ for $|p|^2$ and $|U'|$ scale in $\xi$ that we introduce the following scaling substitutions:
\begin{align}
\xi= \lambda \Xi, \,\,\, \,p = (c_1 \lambda^{1/2} + c_2 + c_3\lambda^{-\delta}) P, \,\,\,\, q= (c_4\lambda^{\frac{3}{2(\ell-1)}}+ c_5)Q
\end{align}
where $(Q, P, \Xi)$ are the new variables, and $c_i \in \R$, $\delta>0$ are constants.  When $c_1=c_2=0$, the parameter $\delta>0$ allows us to analyze the dynamics when $|p|$ is small.  Now observe that after making this substitution, the generator becomes
\begin{align*}
\mathscr{L} &= \frac{ c_1 \lambda^{1/2} + c_2 + c_3\lambda^{-\delta}}{c_4 \lambda^{\frac{3}{2(\ell-1)}}+ c_5} P \partial_Q - (\lambda \Xi + \gamma) P \partial_P - \frac{U'((c_4 \lambda^{\frac{3}{2(\ell-1)}}+ c_5) Q)}{c_1 \lambda^{1/2} + c_2 + c_3\lambda^{-\delta}} \partial_P \\
&\qquad + \lambda^{-1}\big((c_1 \lambda^{1/2} + c_2 + c_3\lambda^{-\delta})^2-1 \big) \partial_\Xi + \gamma \frac{1}{(c_1 \lambda^{1/2} + c_2 + c_3\lambda^{-\delta})^2} \partial_P^2.  
\end{align*}   
Hence if $c_1=c_4=0$ and $c_2, c_5 \neq 0$, or if $c_1\neq 0, c_4=0$ and $c_5\neq 0$, we see that as $\lambda \rightarrow \infty$
\begin{align*}
\mathscr{L} \approx - \lambda \Xi P \partial_P.
\end{align*}  
Next, if $c_1=0, c_4\neq 0$ and $c_2\neq 0$ we have that as $\lambda \rightarrow \infty$ 
\begin{align*}
\mathscr{L} \approx - \lambda^{3/2} U'_L(c_4 Q) \partial_P
\end{align*}
where we recall that $U'_L$ is the leading order term in the polynomial $U'$.  Also, if $c_1, c_4 \neq 0$ we have that as $\lambda \rightarrow \infty$
\begin{align*}
\mathscr{L} \approx - \lambda \Xi P \partial_P - \lambda \frac{U'_L(c_4)}{c_1} \partial_P
\end{align*}
where we are neglect any cancellation that could occur between these two leading-order terms.  
Finally, if $c_1=c_2=0$ and $c_3\neq 0$, it is evident that as $\lambda \rightarrow \infty$ 
\begin{align*}
\mathscr{L} \approx  - \lambda \Xi P \partial_P - \frac{\lambda^\delta}{c_3} U'(c_4 \lambda^{\frac{3}{2(\ell-1)}} + c_5) \partial_P + \frac{\gamma}{c_3^2} \lambda^{2\delta} \partial_P^2.   
\end{align*}
Thus in the region~$\mathcal{R}_2$, the scaling analysis gives that at large energies 
\begin{align*}
\mathscr{L} \approx \mathscr{A}:= -(\xi + \gamma) p \partial_p - U'(q) \partial_p + \gamma \partial_p^2.  
\end{align*}

\begin{remark}
Of course we could break apart the region in $\mathcal{R}_2$ further according to how $\mathscr{L}$ changes above, but the operator $\mathscr{A}$ is simple enough to work with and encapsulates the dominant behaviors identified in the scaling analysis.  Note that the reason $\mathscr{A}$ is simple is that the dynamics driven by this operator is constant in $\xi$ and $q$.  In other words, the scalings suggest that in this region at large energies, the
Markov process associated to $\mathscr{L}$ is approximated well by the Ornstein-Uhlenbeck dynamics
\begin{align}
\label{eqn:OU}
d P_t = -(\xi + \gamma) P_t \,dt - U'(q) \, dt + \sqrt{2\gamma} \, dB_t
\end{align}
where $B_t$ is a standard, real-valued Brownian motion and $\xi\ll -1$ and $q\in \R$ are fixed constants.  \end{remark}

Motivated by this analysis and relation~\eqref{eqn:Lbehaviorpsmall}, for $H\gg1$ in $\mathcal{R}_2$ we should take our perturbation $\psi$ to satisfy
\begin{align}
\label{eqn:pois2}
\mathscr{A} \psi = - \alpha_2 |\xi +\gamma|
\end{align} 
for some constant $\alpha_2 >0$.  Note that in this case we do not need a large negative constant on the righthand side of the equation~\eqref{eqn:pois2} because $|\xi|\rightarrow \infty$ at large energies in $\mathcal{R}_2$.  Also, the appearance of $\gamma$ in the formula~\eqref{eqn:pois2} does not change the qualitative behavior of the solution but it makes the explicit formula below more compact.  Specifically, a particular solution of equation~\eqref{eqn:pois2} is given by 
\begin{align}
\label{eqn:psipert2}
\psi(q,p, \xi) = - 2\alpha_2 \int_0^{\frac{|\xi+ \gamma|^{1/2}}{\sqrt{2\gamma}} \big(p - \frac{U'(q)}{|\xi + \gamma|} \big)} D(z) \, dz
\end{align} 
where $D:\R\rightarrow \R$ is Dawson's integral, which was introduced and discussed in Remark~\ref{rem:dawson}.  By using some well-known asymptotics for the function $D$, we will see in the next section that for any $\epsilon >0$, $|\psi(q,p, \xi)|\leq \epsilon H(q,p, \xi)$ in $\mathcal{R}_2$ for all $|\xi| >0$ large enough depending on $\epsilon$.  Thus using the appropriate cutoff functions, we will see that we can construct, for a given $\epsilon >0$, our desired subsolution $\psi$ with $|\psi|\leq \epsilon H$ globally.

\section{The Lyapunov Function}
\label{sec:lyap}

Following the ideas of Section~\ref{sec:heuristics}, in this section we prove Theorem~\ref{thm:lyapexist}.  We recall that the the Lyapunov function $W$ in the statement of Theorem~\ref{thm:lyapexist} will be of the form~\eqref{eqn:Wform} where $H$ is the Hamiltonian~\eqref{eqn:Hdef} and $\psi \in C^2(\mathcal{X})$ is an appropriately chosen perturbation.  The heuristic scaling analysis coupled with the behavior of $\mathscr{L}H(q,p, \xi)$ when $\xi\ll-1$ and $|p|$ is bounded suggested two qualitatively different forms for $\psi$ in two different regions in $\mathcal{X}$.  Refer to expressions~\eqref{eqn:approx1psi1} and~\eqref{eqn:psipert2} for these two forms in the simplified case when $N=k=m_1=a=k_BT=1$.  These forms will be slightly generalized below to account for changes in dimensionality and parameters.  We also must cutoff each function when the asymptotic analysis is no longer valid.  This will then produce two globally-defined perturbations, which we denote by $\psi_1$ and $\psi_2$ below.  

\begin{remark}
\label{rem:psi0}
We will need one more perturbation, denoted by $\psi_0$ below, which we did not motivate in the previous section.  The function $\psi_0$, however, should be thought of as an auxiliary perturbation which places slightly more weight in the Hamiltonian $H$ on the variable $\xi$ in the region where $\xi<0$.  While a small perturbation itself, it has the advantage inducing dissipation whenever the kinetic energy is large enough while not changing the essential behavior of $\mathscr{L}(\beta_0 H)$ in the ``bad" part of space where $\xi \ll -1$ and $|p|$ is bounded.  The function $\psi_0$ moreover is convenient in that it helps subsume various remainder terms brought on by $\psi_1$ and $\psi_2$.                              
\end{remark}

Following these remarks, Theorem~\ref{thm:lyapexist} is an immediate consequence of the following.
\begin{theorem}
\label{thm:maintech}
Fix $\beta_0>0$ satisfying~\eqref{eqn:beta0bound} and $\epsilon\in (0, \beta_0)$.  Then there exist $\psi_i \in C^2(\mathcal{X})$, $i=0,1,2$, satisfying the following two properties:
\begin{itemize}
\item[(i)]  $|\psi_0 + \psi_1 + \psi_2| \leq \epsilon H$. 
\item[(ii)]  If $V= \beta_0 H + \psi_0+ \psi_1 + \psi_2$, then there exists a compact set $A\subseteq \mathcal{X}$ and constants $\alpha, K>0$ for which the bound~\eqref{eqn:Vbound1} holds.  
\end{itemize} 
\end{theorem}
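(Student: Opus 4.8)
The plan is to construct the three perturbations $\psi_0,\psi_1,\psi_2$ explicitly by generalizing the formulas \eqref{eqn:approx1psi1} and \eqref{eqn:psipert2} from the heuristics to the full $N,k$, general-parameter setting, gluing them together with smooth cutoff functions adapted to the three regions $\mathcal{R}_0,\mathcal{R}_1,\mathcal{R}_2$, and then verifying the two bounds. For $\psi_0$, following Remark~\ref{rem:psi0}, I would take something like $\psi_0 = -c\,\chi(\xi)\,\xi\,(\text{something involving }\|p\|_m^2)$ — concretely a function supported in $\{\xi<0\}$ that is $O(\xi)$ there and whose generator picks up a term proportional to $-|\xi|\,\|p\|_m^2/(\text{lower order})$, so that it produces dissipation precisely when the kinetic energy is not too small relative to $|\xi|$; the exact form is chosen so that $\mathscr{L}\psi_0$ absorbs the remainder terms generated by $\psi_1,\psi_2$ near the boundaries of their regions. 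For $\psi_1$, supported in $\mathcal{R}_1$ (where $|\nabla U|$ dominates), I would take $\psi_1 = \alpha_1 \langle p, \nabla U\rangle \sqrt{\xi^2+1}/|\nabla U|^2$ (the multi-dimensional analogue of \eqref{eqn:approx1psi1}), times a cutoff $\phi_1$ that is $1$ on the bulk of $\mathcal{R}_1$ and vanishes outside it; here $\mathscr{L}\psi_1 \approx -\nabla U\cdot\nabla_p\psi_1 = -\alpha_1\sqrt{\xi^2+1}$ to leading order, which by \eqref{eqn:Lbehaviorpsmall} beats $\beta_0\mathscr{L}H$ once $\alpha_1$ is large. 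For $\psi_2$, supported in $\mathcal{R}_2$ (where $\xi\to-\infty$), I would take the Dawson-integral formula \eqref{eqn:psipert2}, generalized to $\psi_2 = -2\alpha_2\sum_i (\text{appropriate scalar built from }p_i,\nabla_{q_i}U,\xi)$ or more naturally a single Dawson integral in the variable $\tfrac{|\xi+\gamma/m|^{1/2}}{\sqrt{2\gamma k_BT}}(\text{component of }p - \nabla U/|\xi+\gamma|)$, again times a cutoff $\phi_2$; here $\mathscr{A}\psi_2 = -\alpha_2|\xi+\gamma|$ by construction, and $|\xi|\to\infty$ in $\mathcal{R}_2$ supplies the needed largeness without a large constant.

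The verification of (i) rests on the asymptotics of Dawson's integral: $D(z)\to 0$ like $1/(2z)$ as $|z|\to\infty$ and $\int_0^z D$ grows only like $\tfrac12\log|z|$, so $|\psi_2|$ is logarithmic in its argument and hence $o(H)$; $\psi_1$ is made small by choosing $U_*$ large in the definition \eqref{eqn:R1def} of $\mathcal{R}_1$, since there $|\nabla U|\gtrsim U_*(\xi^6+1)^{1/4}$ forces the ratio $|p|\sqrt{\xi^2+1}/|\nabla U|$ to be small on $\mathcal{R}_0^c$; and $\psi_0$ is manifestly $O(\xi)=o(H)$. Combining, for any prescribed $\epsilon>0$ one tunes $U_*$ (and the cutoff thresholds) so that $|\psi_0+\psi_1+\psi_2|\le\epsilon H$ globally. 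This is essentially bookkeeping once the pieces are in hand.

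The real work is (ii): showing $\mathscr{L}V + \gamma k_BT|\nabla_p V|^2 \le -\alpha + K\one_A$. I would split $\mathcal{X}$ into $\mathcal{R}_0$, the interior of $\mathcal{R}_1$, the interior of $\mathcal{R}_2$, and two transition zones where cutoffs are active, and on each estimate $\mathscr{L}(\beta_0 H) + \mathscr{L}\psi_i$ plus the quadratic gradient term. On $\mathcal{R}_0$ one has $\psi_i\equiv 0$ and $\beta_0\mathscr{L}H\le-\alpha$ directly from \eqref{eqn:primitive} (for $K_*,p_*$ large), and the gradient term is handled by the $\beta_0<\beta_*<\beta$ margin exactly as in the $\psi=0$ computation in Section~\ref{sec:heuristics}. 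In the bulk of $\mathcal{R}_1$ the dominant balance $-\nabla U\cdot\nabla_p\psi_1 = -\alpha_1\sqrt{\xi^2+1}$ dominates the $O(\sqrt{\xi^2+1})$ terms from $\beta_0\mathscr{L}H$; one must check the \emph{remainder} terms coming from the other pieces of $\mathscr{L}$ acting on $\psi_1$ (the $\frac{p_i}{m_i}\cdot\nabla_{q_i}$ transport term, the friction term, the $\partial_\xi$ term, and $\Delta_p$), and condition (A4) — specifically the control $|\nabla^2U|/|\nabla U|^\zeta\to0$ with $\zeta<2$ — is exactly what makes these remainders lower order than the main $-\alpha_1\sqrt{\xi^2+1}$ gain; this is the point where the choice $\zeta<2$ (rather than (A4')) becomes essential. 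In the bulk of $\mathcal{R}_2$ the main gain $-\alpha_2|\xi+\gamma|\to-\infty$ dominates everything, but the transition zones are the crux: there the cutoff derivatives $\nabla\phi_i$ multiply $\psi_i$, producing terms of size comparable to $\psi_i$ times the coefficient vector fields, and one must show these are either absorbed by the $-\alpha_2|\xi+\gamma|$ / $-\alpha_1\sqrt{\xi^2+1}$ reserves or by $\mathscr{L}\psi_0$. I expect the threshold $\beta_* = \beta/(8D_{\max}^2)$ to emerge precisely here — in matching the growth of $\psi_2$ (hence the size of the quadratic term $\gamma k_BT|\nabla_p\psi_2|^2$, which involves $D(\cdot)^2$ and so $D_{\max}^2$) against the dissipation budget in the overlap of $\mathcal{R}_2$ with the region where the gradient term in \eqref{eqn:Vbound1} is not yet negligible. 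Managing these overlap estimates — keeping every remainder strictly subordinate to the region's designated reserve while the cutoffs switch on — is the main obstacle; the scaling heuristics of Section~\ref{sec:heuristics} are the guide, but turning them into honest inequalities uniform in $H\ge R$ is where the care goes.
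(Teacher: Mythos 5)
Your overall plan coincides with the paper's: the same three perturbations, the same region decomposition, the same cutoff-and-tune-parameters strategy, with Dawson asymptotics giving (i). However, two steps as you describe them would not go through. First, in the bulk of $\mathcal{R}_2$ it is \emph{not} true that ``the main gain $-\alpha_2|\xi+\gamma|\to-\infty$ dominates everything.'' The gain from $\psi_2$ is only linear in $|\xi|$, and so are its competitors: $\beta_0\mathscr{L}H$ contributes $+\tfrac{\beta_0}{\beta}kN|\xi|$ there (this is exactly the bad region of~\eqref{eqn:primitive}), and the quadratic term in~\eqref{eqn:Vbound1} satisfies $\tfrac{\gamma}{\beta}|\nabla_p\psi_2|^2\approx 2kN\alpha_2^2 D_{\max}^2|\xi|$, since $|F'|\le 2\alpha_2 D_{\max}$ and the argument of $F$ carries the factor $|\xi+\gamma/m_i|^{1/2}$. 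Thus the gradient term is never negligible in $\mathcal{R}_2$, $\alpha_2$ cannot be taken large, and the whole point of the bulk estimate is the optimization $\alpha_2\mapsto \alpha_2-2\alpha_2^2D_{\max}^2$, maximized at $\alpha_2=1/(4D_{\max}^2)$ with net dissipation $\tfrac{kN}{8D_{\max}^2}|\xi|$; requiring this to beat $\tfrac{\beta_0+\delta+\epsilon}{\beta}kN|\xi|$ is precisely where the threshold~\eqref{eqn:beta0bound}, $\beta_*=\beta/(8D_{\max}^2)$, is produced. Your sketch locates the $D_{\max}^2$ competition in the ``overlap/transition'' zones; in the paper those zones are handled routinely (pick $p_*$, then $U_*$, then $\xi_*$ large, and let $\psi_0$ absorb the leftovers), and the crux is this bulk budget, which your sketch as written skips.

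Second, your $\psi_0$ cannot be ``$O(\xi)$'' in the region $\xi<0$: the only way the generator produces the needed absorption term $-\delta|\xi|\,\|p\|_m^2$ is through the thermostat drift $a^{-1}(\|p\|_m^2-kN/\beta)\partial_\xi$ acting on a function whose $\xi$-derivative is of order $\xi$, i.e.\ $\psi_0$ must be quadratic in $\xi$ (the paper takes $\psi_0=\delta f_0(\xi)\,a\xi^2/2$, yielding~\eqref{eqn:psi0bound1} and, importantly, $\nabla_p\psi_0=0$). Making $\psi_0$ depend on $\|p\|_m^2$, as you suggest, both threatens bound (i) (a term like $|\xi|\,\|p\|_m^2$ is not $O(H)$) and creates $\nabla_p\psi_0\neq0$, so the friction part of $\mathscr{L}$ and the quadratic term generate uncontrolled contributions of size $\xi^2|p|^2$. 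A smaller point: the strict inequality $\zeta<2$ in (A4) is not what saves the $\psi_1$ remainders in $\mathcal{R}_1$ (there the weaker (A4')-type control $|\nabla^2U|/|\nabla U|^2\to0$ suffices); it is used in the transport estimate for $\psi_2$, where on the support of the cutoffs $|\nabla U|^2\lesssim U_*(\xi^2+1)$ and one needs $|\nabla U|^{\zeta-1}$ to be beaten by the available powers of $|\xi|$.
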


The proof of Theorem~\ref{thm:maintech} will be broken up into several smaller pieces.  In particular, as the functions $\psi_i$ are introduced, we will also deduce a series of estimates which, when combined at the end of the section, will imply Theorem~\ref{thm:maintech}.

\subsection{Perturbation $\psi_0$}
Our first perturbation, $\psi_0$, is the simplest.  In order to define it, let $f_0\in C^\infty(\R;[0,1])$ be a cutoff function satisfying the following conditions  
\begin{align*}
f_0(y) = \begin{cases}
1 & \text{ if } \,\, y \leq -1\\
0 & \text{ if }\,\, y \geq 0
\end{cases},   \qquad f'_0 \leq 0, \qquad \text{ and } \qquad |f'_0| \leq 2. 
\end{align*}
Let $\delta >0$ and for $(q, p, \xi) \in \mathcal{X}$ define
\begin{align}
\label{eqn:ham}
\psi_0(q,p, \xi) =   \delta f_0(\xi) \frac{a\xi^2}{2}.  
\end{align}
Then it is not hard to check that on $\mathcal{X}$
\begin{align}
\label{eqn:psi0bound0}
|\psi_0| \leq \delta H,
\end{align}
\begin{align}
\label{eqn:psi0bound1}
\mathscr{L}\psi_0(x,v, \xi) 
& \leq - f_0 \delta  |\xi| \|p\|_m^2  + f_0 \frac{\delta}{\beta} kN |\xi|  + \frac{\delta}{\beta} k N  
\end{align}
and 
\begin{align}
\label{eqn:psi0bound2}
\nabla_p \psi_0 = 0.   
\end{align}

\begin{remark}
As discussed in Remark~\ref{rem:psi0}, note that when $\delta >0$ in~\eqref{eqn:psi0bound1} is small, $\psi_0$ allows for a dissipative effect in the region where $\xi\ll -1$ and $\|p\|_m^2$ is bounded below by a sufficiently large positive constant.  Moreover, for $\delta >0$ small, the perturbation is small relative to $\beta_0 H$ and the tradeoff for introducing it is also small relative to $\mathscr{L} (\beta_0 H)$ in the sense that $\mathscr{L} \psi_0 \leq C \delta( |\xi| + 1)$ for some constant $C>0$ independent of $\delta$.      
\end{remark}

\subsection{Perturbation $\psi_1$}
Now to define $\psi_1$, let $K_*>0$ be a parameter and $f_i \in C^\infty(\R;[0,1])$, $i=1,2,3$, be cutoff functions satisfying 
\begin{align*}
f_1(y)= \begin{cases}
1 & \text{ if } y\leq  K_* \\
0 & \text{ if } y\geq  K_*+1
\end{cases}, \,\,\, 
f_2(y)= \begin{cases}
1 & \text{ if } |y|\leq  1  \\
0 & \text{ if } |y|\geq 2
\end{cases},\,\,\, 
f_3(y)= \begin{cases}
1 & \text{ if } |y|\geq 2 \\
0 & \text{ if } |y|\leq  1
\end{cases} .
\end{align*}
 Let $p_*, U_*>0$ be parameters and set 
\begin{align*}
g_1(q,p, \xi) = f_1(\xi) f_2 \bigg( \frac{|p|^2}{p_*\sqrt{\xi^2 + 1}}\bigg) f_3 \bigg( \frac{|\nabla U(q)|^2}{U_* (\xi^2+1)} \bigg)
\end{align*}
Let $\alpha_1>0$ and consider $\psi_1$ defined by 
\begin{align}
\label{eqn:psi1def}
\psi_1(q,p, \xi) =\begin{cases}
g_1(q, p, \xi) \alpha_1\sqrt{\xi^2  + 1} \displaystyle{\frac{ p\cdot \nabla U(q)}{|\nabla U(q)|^2}} & \text{ if } |\nabla U(q)|^2 \geq U_*/2\\
 0 & \text{ otherwise}  
\end{cases}.\end{align} 
 Observe that by construction $\psi_1 \in C^\infty(\mathcal{X})$.  We will now prove the following.
 \begin{lemma}
 \label{lem:psi1}
For any $\epsilon, \alpha_1, K_* >0$, by first picking $p_*>0$ large enough and then picking $U_*>0$ large enough we have the global bounds on $\mathcal{X}$
\begin{align}
\label{eqn:psi1est0}
|\psi_1| \leq \epsilon H, 
\end{align}
 \begin{align}
 \label{eqn:psi1est1}
 \mathscr{L} \psi_1 &\leq- g_1 \alpha_1 \sqrt{\xi^2 + 1} +  \epsilon f_0|\xi|  \|p\|_m^2 + \epsilon |p|^2 + \epsilon f_0 |\xi| + \epsilon,  
 \end{align}
 \begin{align}
\label{eqn:psi1est2} |\nabla_p \psi_1| &\leq \epsilon.  
 \end{align} 
\end{lemma}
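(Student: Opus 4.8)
The plan is to verify the three estimates in turn, working primarily on the region where $g_1 \ne 0$, since $\psi_1$ vanishes identically where $|\nabla U(q)|^2 < U_*/2$ and the cutoff $f_3$ forces $g_1 = 0$ unless $|\nabla U(q)|^2 \geq 2 U_*(\xi^2+1) \geq U_*$. First I would dispose of \eqref{eqn:psi1est0}: on the support of $g_1$ we have $|p|^2 \leq 2 p_*\sqrt{\xi^2+1}$ and $|\nabla U(q)|^2 \geq U_*(\xi^2+1)$, so Cauchy--Schwarz gives
\begin{align*}
|\psi_1| \leq \alpha_1 \sqrt{\xi^2+1}\,\frac{|p|}{|\nabla U(q)|} \leq \alpha_1 \sqrt{\xi^2+1}\,\frac{(2p_*)^{1/2}(\xi^2+1)^{1/4}}{U_*^{1/2}(\xi^2+1)^{1/2}} = \alpha_1 \Big(\frac{2 p_*}{U_*}\Big)^{1/2}(\xi^2+1)^{1/4}.
\end{align*}
Since $(\xi^2+1)^{1/4} \leq C(1 + H)$ trivially (indeed $\xi^2 \leq 2 H/a$), choosing $U_*$ large relative to $p_*$ and $\alpha_1$ makes this $\leq \epsilon H$ plus a bounded term that can be absorbed; a cleaner route is to note $(\xi^2+1)^{1/4} = o(H)$ and use that $\psi_1$ is supported where $H$ is large once $U_*$ is large (because $|\nabla U| \to \infty$ forces $U \to \infty$ by (A4)). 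The gradient bound \eqref{eqn:psi1est2} is similar: $\nabla_p \psi_1 = g_1 \alpha_1 \sqrt{\xi^2+1}\,\nabla U/|\nabla U|^2 + (\nabla_p g_1)\,\alpha_1 \sqrt{\xi^2+1}\, (p\cdot\nabla U)/|\nabla U|^2$; the first term has size $\leq \alpha_1 \sqrt{\xi^2+1}/|\nabla U| \leq \alpha_1(\xi^2+1)^{1/4}/U_*^{1/2}$, and $\nabla_p g_1$ contributes a factor $\lesssim 1/(p_*\sqrt{\xi^2+1})\cdot|p| \lesssim (p_*)^{-1/2}(\xi^2+1)^{-1/4}$ times the bound on $|\psi_1|$, so both pieces are $o(1)$ as $U_* \to \infty$ on the (large-energy) support, hence $\leq \epsilon$ after the cutoff regions are handled.

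The substantive part is \eqref{eqn:psi1est1}. I would write $\mathscr{L}\psi_1 = g_1 \mathscr{L}\big(\alpha_1\sqrt{\xi^2+1}\,\tfrac{p\cdot\nabla U}{|\nabla U|^2}\big) + \big(\mathscr{L}g_1\big)\,\alpha_1\sqrt{\xi^2+1}\,\tfrac{p\cdot\nabla U}{|\nabla U|^2} + (\text{cross terms from the }\Delta_p\text{ and }p\cdot\nabla_q\text{ carré-du-champ})$. The leading term is designed so that the $-\nabla U\cdot\nabla_p$ piece of $\mathscr{L}$ hits $\tfrac{p\cdot\nabla U}{|\nabla U|^2}$ and produces $-\alpha_1\sqrt{\xi^2+1}$, which is the main gain; this is exactly the heuristic \eqref{eqn:approx1psi}--\eqref{eqn:approx1psi1} generalized. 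Every other term coming from applying $\mathscr{L}$ to this factor must be shown to be a remainder: the $\tfrac{p_i}{m_i}\cdot\nabla_{q_i}$ term produces a factor $\nabla^2 U$ contracted against $p\otimes\nabla U$, which by (A4) is $o(|\nabla U|^\zeta)\cdot|p|\cdot|\nabla U|$ relative to $|\nabla U|^2$, hence small once $U_*$ is large (here $\zeta<2$ is crucial); the $-(\xi+\gamma/m_i)p_i\cdot\nabla_{p_i}$ term gives back a multiple of $\psi_1$ itself times $(\xi+\gamma/m_i)$, of order $|\xi|\,(\xi^2+1)^{1/4}/U_*^{1/2}$, which on the support is dominated by $\epsilon f_0|\xi|\|p\|_m^2 + \epsilon f_0|\xi| + \epsilon$ after choosing $U_*$ large — this is where the terms on the right-hand side of \eqref{eqn:psi1est1} other than the main one are absorbed; the $a^{-1}(\|p\|_m^2 - kN/\beta)\partial_\xi$ term hits $\sqrt{\xi^2+1}$, giving a bounded-in-$\xi$ factor times $\|p\|_m^2/|\nabla U|$, again $o(1)$; and $\tfrac{\gamma}{\beta}\Delta_p$ annihilates the factor (it is linear in $p$). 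The terms where $\mathscr{L}$ differentiates $g_1$ are supported only on the transition shells of $f_1, f_2, f_3$ — e.g. where $|p|^2 \asymp p_*\sqrt{\xi^2+1}$ or $|\nabla U|^2 \asymp U_*(\xi^2+1)$ — and on those shells one uses that each derivative of $g_1$ carries a compensating negative power ($|\nabla_p f_2| \lesssim (p_*\sqrt{\xi^2+1})^{-1}$, etc.) so the whole contribution is $o(1)$ relative to the main $-g_1\alpha_1\sqrt{\xi^2+1}$ or, where $f_1$ is cut, is supported in $\{\xi \asymp K_*\}$ and hence bounded, absorbed into the final $\epsilon$.

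The main obstacle I anticipate is bookkeeping the region where $g_1$ transitions and making sure the loss incurred there does not destroy the gain $-g_1\alpha_1\sqrt{\xi^2+1}$: concretely, one must check that on $\{1 \leq |p|^2/(p_*\sqrt{\xi^2+1}) \leq 2\}$ and on $\{1 \leq |\nabla U|^2/(U_*(\xi^2+1)) \leq 2\}$ the terms $(\mathscr{L}g_1)\psi_1$ and the carré-du-champ cross terms are bounded by $\epsilon f_0|\xi|\|p\|_m^2 + \epsilon|p|^2 + \epsilon f_0|\xi| + \epsilon$ — note the $\epsilon|p|^2$ term on the right of \eqref{eqn:psi1est1} is present precisely to soak up contributions of order $|p|^2/|\nabla U|$ or $|p|^2 \sqrt{\xi^2+1}/|\nabla U|^2$ arising here. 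The order of quantifiers in the statement (pick $p_*$ first, then $U_*$) is what makes this work: $p_*$ is fixed to control the $|p|$-shell geometry, and then $U_*$ is taken large enough that all the $1/U_*^{1/2}$-type factors beat $\epsilon$. I would organize the estimate as: (1) split $\mathscr{L}\psi_1$ into the main term, the ``same-factor'' remainder, and the ``$\mathscr{L}g_1$'' remainder; (2) bound the main term exactly; (3) bound each remainder by $o(1)\cdot g_1\sqrt{\xi^2+1}$ plus something supported on a compact set in $\xi$ or dominated by $\epsilon(|p|^2 + f_0|\xi|\|p\|_m^2 + f_0|\xi| + 1)$; (4) collect.
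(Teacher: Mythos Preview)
Your proposal is correct and identifies all the right mechanisms: the main gain $-g_1\alpha_1\sqrt{\xi^2+1}$ comes from $-\nabla U\cdot\nabla_p$ hitting the linear-in-$p$ factor, every other term is a remainder controlled either by (A4) (the $|\nabla^2 U|/|\nabla U|^\zeta$ smallness) or by the cutoff geometry, and the order of parameter choice ($p_*$ then $U_*$) is used exactly as intended. One small slip: on the support of $g_1$ you only have $|\nabla U|^2 \geq U_*(\xi^2+1)$, not $\geq 2U_*(\xi^2+1)$, but this is immaterial.

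The only real difference from the paper is organizational. You split $\mathscr{L}\psi_1$ via the Leibniz rule --- main factor versus cutoff $g_1$ versus cross terms --- whereas the paper decomposes the \emph{operator} as $\mathscr{L}=\mathscr{T}_1+\mathscr{A}+\mathscr{T}_2$ with $\mathscr{T}_1=\sum_i m_i^{-1}p_i\cdot\nabla_{q_i}$, $\mathscr{T}_2=a^{-1}(\|p\|_m^2-kN/\beta)\partial_\xi$, and $\mathscr{A}$ the Ornstein--Uhlenbeck-in-$p$ part, and then estimates $\mathscr{T}_1\psi_1$, $\mathscr{A}\psi_1$, $\mathscr{T}_2\psi_1$ separately. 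The two groupings produce the same collection of terms, just bundled differently; the paper's version has the minor advantage that each piece $\mathscr{T}_j\psi_1$ is already adapted to one ``direction'' of differentiation (in $q$, in $p$, in $\xi$), which makes the role of (A4) in $\mathscr{T}_1$ and the boundedness of $\partial_\xi\sqrt{\xi^2+1}$ in $\mathscr{T}_2$ especially transparent, while your Leibniz split makes the main-term-versus-remainder structure more visible. Either bookkeeping works.
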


\begin{remark}
Note that the region where $g_1 \neq 0$ does not quite coincide with the form of the region $\mathcal{R}_1$ introduced in~\eqref{eqn:R1def}.  In particular, by considering powers of $\xi$, the region here is larger than $\mathcal{R}_1$.  The fact that $\psi_1$ provides the needed Lyapunov estimate on this larger region is only made possible by the presence of $\psi_0$, as it allows us to estimate meddling remainder terms, e.g. $\epsilon |\xi| \|p\|_m^2$ in~\eqref{eqn:psi1est1}, for which $\psi_1$ itself cannot account.  Noticing this fact was crucial in the analysis because it affords the luxury of working with normal, as opposed to a more restricted class of, potentials $U$.                  
\end{remark}

\begin{proof}[Proof of Lemma~\ref{lem:psi1}]
The first estimate~\eqref{eqn:psi1est0} follows easily after noting that 
\begin{align*}
|\psi_1(q,p, \xi)| \leq \frac{2\alpha_1 p_*^{1/2}}{U_*^{1/2}} (\xi^2 +1)^{1/4}.  
\end{align*}
We now turn to the issue of estimating $\mathscr{L} \psi_1$, which we split into three parts as follows
\begin{align*}
\mathscr{L} \psi_1 = \mathscr{T}_1 \psi_1 + \mathscr{A}\psi_1 + \mathscr{T}_2 \psi_1
\end{align*}
where $\mathscr{T}_1 = \sum_{i=1}^N m^{-1}_i p_i\cdot \nabla_{q_i}$,  $\mathscr{T}_2 = a^{-1}( \|p\|_m^2 - kN/\beta) \partial_\xi$ and
\begin{align}
\label{eqn:opdef}
 \mathscr{A} = \frac{\gamma}{\beta}\Delta_p - \nabla_{q} U \cdot \nabla_p+  \sum_{i=1}^N - \big(\xi + \frac{\gamma}{m_i}\big) p_i \cdot \nabla_{p_i}  \end{align}
 where we again recall that $\beta = 1/(k_B T)$.  
First note that if $c= \min_i m_i$, then 
 \begin{align*}
 \mathscr{T}_1 \psi_1 &= \sum_{i, \ell} g_1 \alpha_1 \sqrt{ \xi^2 + 1} m_{i}^{-1} p_i^\ell p\cdot \partial_{q_i^\ell}\bigg( \frac{\nabla U}{|\nabla U|^2} \bigg)\\
 &\qquad + \sum_{i, \ell, j,\ell'}  f_1 f_2 f_3' \alpha_1 \sqrt{\xi^2 + 1} m_i^{-1} p_i^\ell \frac{p\cdot \nabla U}{|\nabla U| ^2}\frac{2  (\partial_{q_j^{\ell'}} U)  \partial_{q_i^\ell q_{j}^{\ell'}}^2 U }{U_*( \xi^2 + 1)}\\
 &\leq   g_1 c^{-1} \alpha_1\sqrt{ \xi^2 + 1} |p|^2 |\nabla G| + 4 c^{-1} f_1 f_2 |f_3'| \alpha_1 \sqrt{ \xi^2 + 1}  |p|^2 \frac{|\nabla^2 U|}{|\nabla U|^2} 
 \end{align*}  
 where $G = \nabla U/|\nabla U|^2$.  Note that for every $\epsilon, \alpha_1,  K_* >0$ since $U$ is an normal potential (see Definition~\ref{def:normpot}) we may choose $U_*>0$ large enough to control the $\nabla G$ and $|\nabla^2 U|/|\nabla U|^2$ terms above and arrive at the global estimate   
\begin{align}
\label{psi1:b1}
\mathscr{T}_1 \psi_1(q,p, \xi) &\leq \epsilon f_0 |\xi| \|p\|_m^2 +\epsilon |p|^2 . 
 \end{align}
 Turning to the next term $\mathscr{A} \psi_1$, observe that 
   \begin{align*}
 \mathscr{A} \psi_1 
 &=\frac{4\gamma \alpha_1 + 2\gamma \alpha_1 k N}{\beta p_*}  f_1 f_2' f_3 \frac{p\cdot \nabla U}{|\nabla U|^2} + \frac{4\gamma \alpha_1}{\beta p_*} f_1 f_2'' f_3 \frac{|p|^2}{p_* \sqrt{\xi^2 + 1}} \frac{p \cdot \nabla U}{|\nabla U|^2}\\
 &\qquad - g_1 \alpha_1 \sqrt{\xi^2 + 1} + \frac{2\alpha_1}{p_*} f_1 f_2' f_3 \frac{(p\cdot \nabla U)^2}{|\nabla U|^2}\\
 &\qquad + \sum_{i=1}^N - g_1 \alpha_1 (\xi + \gamma/m_i) \sqrt{\xi^2 +1} \frac{p_i \cdot \nabla_{q_i} U}{|\nabla U|^2}\\
 &\qquad + \sum_{i=1}^N - \frac{2\alpha_1}{p_*} f_1 f_2' f_3 (\xi + \gamma/m_i) |p_i|^2 \frac{p\cdot \nabla U}{|\nabla U|^2}.  
 \end{align*}
  Recalling that $c= \min_i m_i>0$, note that we can estimate each term above as follows
  \begin{align*}
  \mathscr{A} \psi_1&\leq - g_1 \alpha_1 \sqrt{ \xi^2 + 1} + \frac{4\gamma \alpha_1 + 2\gamma \alpha_1 k N}{\beta p_*} f_1 |f_2'| f_3 \frac{|p|}{|\nabla U|}\\
  &\qquad + \frac{8 \gamma \alpha_1}{\beta p_*} f_1 |f_2''| f_3 \frac{|p|}{|\nabla U|} + \frac{2\alpha_1}{p_*} f_1 |f_2'| f_3 |p|^2\\
  &\qquad + g_1  \alpha_1 \frac{(|\xi| + \gamma/c) \sqrt{\xi^2 +1}}{|\nabla U|} |p| + \frac{2\alpha_1 }{p_*} f_1 |f_2'| f_3 \frac{( |\xi |+ \gamma/c)|p|}{|\nabla U|} |p|^2.  
  \end{align*}
  Now for any $\epsilon, \alpha_1,  K_* >0$, by first picking $p_*>0$ large enough and then picking $U_*>0$ large enough, we arrive at the global estimate
  \begin{align}
  \label{psi1:b2}
  \mathscr{A} \psi_1 \leq - g_1 \alpha_1 \sqrt{ \xi^2 + 1} + \epsilon  f_0 |\xi| \|p\|^2_m + \epsilon f_0 |\xi| + \epsilon |p|^2 + \epsilon.
  \end{align}
  Finally, we estimate $\mathscr{T}_2 \psi_1$.  Note that
  \begin{align*}
  \mathscr{T}_2 \psi_1(q,p, \xi) &= \frac{\alpha_1 p \cdot \nabla U}{a |\nabla U|^2} (\|p\|_m^2 - kN/\beta) \bigg\{f_1' f_2 f_3 \sqrt{\xi^2 +1} - f_1 f_2' f_3 \frac{|p|^2 \xi}{p_* (\xi^2 + 1)^{3/2}} \\
  &\qquad \qquad   - f_1 f_2 f_3' \frac{2|\nabla U|^2 \xi}{U_*(\xi^2 +1)^{2}} + g_1 \frac{\xi}{\sqrt{\xi^2 +1}} \bigg\}\\
  &\leq (\|p\|_m^2 + k N /\beta) \bigg\{ \frac{\alpha_1}{a} |f_1'| f_2 f_3 \frac{|p| \sqrt{\xi^2 + 1}}{|\nabla U|} + \frac{2\alpha_1}{a} f_1 |f_2'| f_3 \frac{|p| |\xi|}{|\nabla U| (\xi^2 +1)}\\
  &\qquad \qquad \frac{4 \alpha_1}{a} f_1 f_2 |f_3'| \frac{|p| |\xi|}{|\nabla U| (\xi^2 + 1)} + \frac{\alpha_1}{a} \frac{|p| |\xi|}{|\nabla U |\sqrt{\xi^2 + 1}|}\bigg\}
  \end{align*}
Thus by counting powers of $|\xi|$, for any choice of $\epsilon, \alpha_1, p_*, K_*>0$, we may pick $U_*>0$ large enough so that  
 \begin{align}
 \label{psi1:b3}
 \mathscr{T}_2 \psi_1 (q, p, \xi) \leq \epsilon f_0 | \xi| \|p\|_m^2 + \epsilon f_0 |\xi|+ \epsilon |p|^2 + \epsilon.  
 \end{align} 
 Combining the bounds~\eqref{psi1:b1}, \eqref{psi1:b2} and \eqref{psi1:b3} and adjusting $\epsilon>0$ and the constants appropriately produces the estimate~\eqref{eqn:psi1est1}.
 
 To establish the bound on $\nabla_p (\psi_1)$ observe that 
 \begin{align*}
 \nabla_p\psi_1 = g_1 \alpha_1 \sqrt{\xi^2 +1} \frac{\nabla U}{|\nabla U|^2} + f_1 f_2'f_3 \frac{2\alpha_1 p}{p_*} \frac{p \cdot \nabla U}{|\nabla U|^2},
 \end{align*}   
 hence 
 \begin{align*}
 |\nabla_p \psi_1| \leq  g_1\alpha_1  \frac{\sqrt{\xi^2 +1}}{|\nabla U|} + f_1 |f_2'| f_3 \frac{2\alpha_1 |p|^2}{p_* |\nabla U|}.
 \end{align*}
 Thus for every $\epsilon, \alpha_1, p_*, K_*>0$ we may pick $U_*>0$ large enough so that the estimate~\eqref{eqn:psi1est2} holds.  
 \end{proof}
 

 \subsection{Perturbation $\psi_2$}
In order to define the second perturbation $\psi_2$, let $\alpha_2>0$ be a parameter and $F:\R\rightarrow \R$ be given by 
\begin{align}
F(z)= -2\alpha_2 \int_0^z \exp(-y^2) \int_0^y \exp(x^2) \, dx \, dy.    
\end{align}
Observe that $F'/(-2\alpha_2)=D$ where $D$ is Dawson's integral, as defined and discussed in Remark~\ref{rem:dawson}.  The function $F$ will make up part of the formula for $\psi_2$, and thus we will need the following proposition in our analysis below.  
\begin{proposition}
\label{prop:asymp}
As $|z|\rightarrow \infty$
\begin{align}
\frac{|F(z)|}{\alpha_2 \log(|z|)} \rightarrow 1\qquad \text{ and } \qquad \frac{|z| |F'(z)|}{\alpha_2}\rightarrow 1.   
\end{align}
\end{proposition}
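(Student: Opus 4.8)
The plan is to reduce the whole statement to the classical first-order asymptotics of Dawson's integral, $D(z)\sim \tfrac{1}{2z}$ as $z\to+\infty$, and then integrate. First I would record the relevant parity: $D$ is odd (immediate from $D(z)=e^{-z^2}\int_0^z e^{x^2}\,dx$), so $F'=-2\alpha_2 D$ is odd and hence $F$ is even. In particular $|F'(z)|=|F'(|z|)|$ and $|F(z)|=|F(|z|)|$, so it suffices to prove both limits as $z\to+\infty$. Also note $D(y)>0$ for $y>0$, which will be convenient for the squeeze argument below.

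Next I would establish $D(z)=\tfrac{1}{2z}(1+o(1))$ as $z\to+\infty$. Writing $e^{x^2}=\tfrac{1}{2x}\tfrac{d}{dx}(e^{x^2})$ for $x\neq 0$ and integrating by parts,
\[
\int_1^z e^{x^2}\,dx=\frac{e^{z^2}}{2z}-\frac{e}{2}+\int_1^z\frac{e^{x^2}}{2x^2}\,dx .
\]
Since $e^{z^2}/z\to\infty$, L'H\^opital's rule applied to the ratio $\bigl(\int_1^z \tfrac{e^{x^2}}{2x^2}\,dx\bigr)\big/\bigl(e^{z^2}/z\bigr)$ gives a derivative ratio $\tfrac{1}{2(2z^2-1)}\to 0$, so $\int_1^z\tfrac{e^{x^2}}{2x^2}\,dx=o\!\bigl(e^{z^2}/z\bigr)$. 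Multiplying the displayed identity by $e^{-z^2}$ and adding the bounded term $e^{-z^2}\int_0^1 e^{x^2}\,dx$ yields $D(z)=\tfrac{1}{2z}+o(1/z)$, as claimed. (This expansion is of course classical; see the references in Remark~\ref{rem:dawson}. If one prefers a fully self-contained proof, the integration-by-parts computation above already suffices.)

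The two limits in Proposition~\ref{prop:asymp} now follow. For the second, $|z|\,|F'(z)|/\alpha_2=2|z|\,|D(|z|)|\to 1$ directly from $D(z)\sim\tfrac1{2z}$ together with the parity reduction. For the first, fix $\epsilon\in(0,1)$; by the asymptotic for $D$ there is $Y>1$ with $\tfrac{1-\epsilon}{2y}\le D(y)\le\tfrac{1+\epsilon}{2y}$ for all $y\ge Y$. Integrating over $[Y,z]$ and using that $\int_0^Y D(y)\,dy$ is a fixed finite constant gives
\[
\tfrac{1-\epsilon}{2}\le\liminf_{z\to\infty}\frac{1}{\log z}\int_0^z D(y)\,dy\le\limsup_{z\to\infty}\frac{1}{\log z}\int_0^z D(y)\,dy\le\tfrac{1+\epsilon}{2},
\]
and letting $\epsilon\downarrow 0$ yields $\frac{1}{\log z}\int_0^z D(y)\,dy\to\tfrac12$. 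Since $|F(z)|=2\alpha_2\bigl|\int_0^z D(y)\,dy\bigr|$ and $F$ is even, this is exactly $|F(z)|/(\alpha_2\log|z|)\to 1$ as $|z|\to\infty$. There is no real obstacle here: the only substantive input is the leading-order asymptotic of $D$, and everything else is parity bookkeeping and an elementary $\log$-integration.
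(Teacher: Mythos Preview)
Your proof is correct and uses essentially the same idea as the paper: both arguments rest on the integration-by-parts identity $\int_1^y e^{x^2}\,dx = \frac{e^{y^2}}{2y} - \frac{e}{2} + \int_1^y \frac{e^{x^2}}{2x^2}\,dx$ to extract the leading $\tfrac{1}{2z}$ behavior. The only organizational difference is that the paper applies this IBP inside the double integral defining $F$ and reads off $F(z)=-\alpha_2\log z + R_2(z)$ directly (then checks $R_2=o(\log z)$ and $R_2'=o(1/z)$), whereas you first isolate the asymptotic $D(z)\sim \tfrac{1}{2z}$, obtain the $F'$ limit immediately, and then integrate via a clean $\epsilon$-squeeze to get the $F$ limit; the content is the same.
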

\begin{proof}
By symmetry, it suffices to prove the asymptotic formulas as $z\rightarrow \infty$.  Note that for $z\geq 1$ we may write 
\begin{align*}
F(z)&= - 2\alpha_2 \int_1^z \exp(-y^2) \int_1^y \exp(x^2) \, dx \, dy +R_0(z)\\
&=-\alpha_2 \log(z) - \alpha_2 \int_1^z \exp(-y^2) \int_1^y \frac{\exp(x^2)}{x^2}\, dx \, dy + R_1(z)\\
&= -\alpha_2 \log(z) +R_2(z)
\end{align*}   
where the penultimate line follows by integration by parts on 
\begin{align*}
\int_1^y \exp(x^2) \, dx= \int_1^y \frac{1}{2x}(\exp(x^2))' \, dx.
\end{align*}
It is not hard to check that $R_2(z)=o(\log(z))$ and $R_2'(z)= o(1/z)$ as $z\rightarrow \infty$, finishing the proof.       
\end{proof}

Fix $i \in \{1,2,\ldots, N\}$ and a constant $\xi_*>\max_{j} (3\gamma/m_j+1)$.  Let $h_2=f_2$ and introduce auxiliary cutoff functions $ h_i \in C^{\infty}( \R; [0,1])$, $i=1,3$, satisfying 
\begin{align*}
h_1(y) = \begin{cases}
1 & \text{ if } y \leq - \xi_* -1 \\
0 & \text{ if } y \geq -\xi_*
\end{cases} \qquad \text{ with } \qquad |h_1'| \leq 2
\end{align*}
and
\begin{align*}
h_3(y) = \begin{cases}
1 & \text{ if } \,\, |y| \leq 3 \\
0 & \text{ if } \,\, |y| \geq 4
\end{cases}
\end{align*}
Recalling the parameters $p_*, U_*>0$, define
\begin{align*}
g_2(q,p, \xi) = h_1(\xi) h_2\bigg( \frac{|p|^2}{p_*\sqrt{ \xi^2 +1}}\bigg) h_3 \bigg( \frac{|\nabla U (q)|^2}{U_*( \xi^2 + 1)}\bigg).
\end{align*}  
Fixing $\ell\in \{1,2,\ldots, k\}$ we set 
\begin{align*}
\psi_i^\ell(q,p, \xi) = \begin{cases}
g_2(q,p, \xi) F\bigg(\frac{|\xi+ \tfrac{\gamma}{m_i}|^{1/2}}{\sqrt{2\gamma/\beta} }\bigg( p_i^\ell- \frac{\partial_{q_i^\ell} U}{|\xi+\frac{\gamma}{m_i}|} \bigg)\bigg) & \text{ if } \xi \leq -\frac{3\gamma}{m_i}\\
0 & \text{ if } \xi > -\frac{3\gamma}{m_i}
\end{cases}
\end{align*}
and define $\psi_2: \mathcal{X}\rightarrow \R$ by 
\begin{align}
\psi_2(q,p, \xi) = \sum_{i=1}^N \sum_{\ell=1}^k \psi_i^\ell(q,p, \xi).  
\end{align}
We now show the following:
\begin{lemma}
\label{lem:psi2}
For each $\epsilon, \alpha_2, K_* >0$ we can pick $p_*>0$ large enough, then $U_* >0$ large enough and then $\xi_* >0$ large such that the following estimates hold on $\mathcal{X}$
\begin{align}
\label{eqn:psi2est0}
|\psi_2| \leq \epsilon H,
\end{align}
\begin{align}
\label{eqn:psi2est}
\mathscr{L} \psi_2(q,p, \xi) \leq - \alpha_2 kN  g_2  |\xi| + \epsilon |p|^2 + \epsilon + \epsilon f_0 |\xi | \|p\|_m^2 + \epsilon f_0 |\xi| ,
\end{align} 
and
\begin{align}
\label{eqn:gradpsi2}
\frac{\gamma}{\beta}|\nabla_p (\psi_2)|^2& \leq  2 k N  \alpha_2^2 D_\text{\emph{max}}^2 g_2     |\xi| + \epsilon f_0 |\xi| + \epsilon.  
\end{align}
\end{lemma}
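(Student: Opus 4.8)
The plan is to mirror the structure of the proof of Lemma~\ref{lem:psi1}: decompose $\mathscr{L} = \mathscr{T}_1 + \mathscr{A} + \mathscr{T}_2$ (with the same $\mathscr{T}_1, \mathscr{T}_2, \mathscr{A}$ as there), estimate $\mathscr{L}\psi_2$ piece by piece, and track which terms the cutoff $g_2$ kills and which remainder terms survive to be absorbed by the $\epsilon$-terms in~\eqref{eqn:psi2est}. First I would record the pointwise bound $|\psi_2| \le kN \max_i |F(\text{argument})|$; using Proposition~\ref{prop:asymp} the argument is of size at most a constant times $|\xi|^{1/2}(|p| + |\nabla U|/|\xi|)$, which on the support of $g_2$ (where $|p|^2 \le 2 p_*\sqrt{\xi^2+1}$ and $|\nabla U|^2 \le 4 U_*(\xi^2+1)$) grows like $|\xi|^{3/4}$ at worst, so $|F| = O(\log|\xi|) = o(H)$, giving~\eqref{eqn:psi2est0} for any $\epsilon$ once $\xi_*$ is large (and, crucially, $F$ is bounded on the bounded-$\xi$ part, so no issue there).

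Next I would compute $\mathscr{A}\psi_i^\ell$. The key design point is that the argument $u_i^\ell := \frac{|\xi+\gamma/m_i|^{1/2}}{\sqrt{2\gamma/\beta}}\big(p_i^\ell - \frac{\partial_{q_i^\ell}U}{|\xi+\gamma/m_i|}\big)$ is, for $\xi$ and $q$ frozen, precisely the variable in which the Ornstein--Uhlenbeck operator $\frac{\gamma}{\beta}\partial_{p_i^\ell}^2 - (\xi+\gamma/m_i)p_i^\ell\partial_{p_i^\ell} - \partial_{q_i^\ell}U\,\partial_{p_i^\ell}$ becomes $\tfrac12 u\,\partial_u^2 - u\,\partial_u$ up to the scaling factor; so $F$ having been chosen with $F'' = -2\alpha_2 D$, $D' = 1 - 2zD$, one gets $\big(\tfrac{\gamma}{\beta}\partial_{p_i^\ell}^2 - (\xi+\gamma/m_i)p_i^\ell\partial_{p_i^\ell} - \partial_{q_i^\ell}U\,\partial_{p_i^\ell}\big)F(u_i^\ell) = |\xi+\gamma/m_i| F''/(\dots) \cdot(\dots) = -\alpha_2|\xi + \gamma/m_i|$ on the set where $g_2 \equiv 1$. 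Summing over $i,\ell$ yields the main term $-\alpha_2 kN g_2|\xi|$ (after replacing $|\xi+\gamma/m_i|$ by $|\xi|$ plus a bounded error, legitimate since $\xi \le -3\gamma/m_i$ on the relevant set). The remaining contributions to $\mathscr{A}\psi_i^\ell$ come from derivatives hitting $g_2$ and from the fact that $\partial_{q_i^\ell}U$ depends on $q$ so the ``frozen-$q$'' cancellation is not exact; each such term carries at least one factor $f_2'$, $f_2''$, $h_1'$, $h_3'$ or a Hessian $\nabla^2 U$, hence is supported where $|p|^2 \asymp p_*\sqrt{\xi^2+1}$ or $|\nabla U|^2 \asymp U_*(\xi^2+1)$, and counting powers of $|\xi|$ and using (A4) (the normal-potential condition controlling $|\nabla^2 U|/|\nabla U|^\zeta$ with $\zeta<2$) these are dominated by $\epsilon|p|^2 + \epsilon f_0|\xi|\|p\|_m^2 + \epsilon f_0|\xi| + \epsilon$ for $p_*$, then $U_*$, then $\xi_*$ chosen large. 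The $\mathscr{T}_1\psi_2$ and $\mathscr{T}_2\psi_2$ terms are handled exactly as the corresponding terms in Lemma~\ref{lem:psi1}: every term contains a $q$-gradient of $g_2$ or of $\partial_{q_i^\ell}U/|\xi+\gamma/m_i|$ (hence a Hessian) or the factor $(\|p\|_m^2 - kN/\beta)\partial_\xi$ acting on a function whose $\xi$-derivative gains negative powers of $|\xi|$, so after counting powers they all fit inside the $\epsilon$-budget.

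Finally I would prove~\eqref{eqn:gradpsi2}: on the set where $g_2\equiv 1$, $\nabla_{p_i^\ell}\psi_i^\ell = F'(u_i^\ell)\frac{|\xi+\gamma/m_i|^{1/2}}{\sqrt{2\gamma/\beta}}$, so $\frac{\gamma}{\beta}|\nabla_{p_i^\ell}\psi_i^\ell|^2 = \frac{\gamma}{\beta}\cdot\frac{|\xi+\gamma/m_i|}{2\gamma/\beta} F'(u_i^\ell)^2 = \tfrac12|\xi+\gamma/m_i|\,F'(u_i^\ell)^2$; since $|F'| = 2\alpha_2|D| \le 2\alpha_2 D_\text{max}$, summing over $i,\ell$ gives $\frac{\gamma}{\beta}|\nabla_p\psi_2|^2 \le \tfrac12 \cdot 4\alpha_2^2 D_\text{max}^2 \cdot kN \cdot |\xi| + (\text{error})$; the error comes from $p$-derivatives of $g_2$ (factor $f_2'$ or $f_2''$), which again are supported on $|p|^2 \asymp p_*\sqrt{\xi^2+1}$ and carry controllable powers of $|\xi|$, plus the replacement of $|\xi+\gamma/m_i|$ by $|\xi|$, all absorbable into $\epsilon f_0|\xi| + \epsilon$. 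I expect the main obstacle to be the $\mathscr{A}\psi_i^\ell$ computation: verifying that the OU cancellation produces exactly $-\alpha_2|\xi+\gamma/m_i|$ and then bookkeeping the genuinely many leftover terms (products of cutoff derivatives times the various growth factors in $F$, $F'$, $F''$ evaluated at $u_i^\ell$, together with the $\nabla^2 U$-terms), showing each is $o$ of the available budget by a careful power count in $|\xi|$ on the support of the relevant cutoff derivative, and doing so in the correct order of parameter choices ($p_*$ first, then $U_*$, then $\xi_*$). One must also check that $\psi_2 \in C^2$ across the seams $\xi = -3\gamma/m_i$, which holds because $g_2$ and hence $\psi_i^\ell$ vanish identically there.
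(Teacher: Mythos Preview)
Your proposal is correct and follows essentially the same route as the paper's proof: the same decomposition $\mathscr{L}=\mathscr{T}_1+\mathscr{A}+\mathscr{T}_2$, the exact OU cancellation $\mathscr{A}_i^\ell F(u_i^\ell)=-\alpha_2|\xi+\gamma/m_i|$ producing the main term, Proposition~\ref{prop:asymp} controlling $|F|$ and $|F'|$, condition~(A4) handling the Hessian factors in $\mathscr{T}_1$, and the bound $|F'|\le 2\alpha_2 D_{\text{max}}$ giving the gradient estimate with the stated constant $2kN\alpha_2^2 D_{\text{max}}^2$. One small clarification: the ``frozen-$q$'' cancellation in $\mathscr{A}\psi_i^\ell$ is in fact exact (since $\mathscr{A}$ contains no $\partial_q$), so all remainder terms there come solely from derivatives landing on $g_2$; the $q$-dependence of $\partial_{q_i^\ell}U$ in the argument of $F$ only produces extra terms through $\mathscr{T}_1$, which you already account for separately.
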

\begin{proof}
To see the first estimate~\eqref{eqn:psi2est0}, fix $i \in \{ 1,2,\ldots, N\}$ and $\ell\in \{ 1,2,\ldots, k \}$.  Applying Proposition~\ref{prop:asymp}, we see that there exists constants $C, D>0$ such that for all $(q,p, \xi) \in \mathcal{X}$
\begin{align*}
|\psi_i^\ell(q,p, \xi)| \leq g_2(C \log|\xi|+D).
\end{align*}
Since $0\leq g_2 \leq 1$ globally and $g_2\equiv 0$ whenever $\xi \geq -\xi_*$, it follows that we may pick $\xi_*>0$ large enough so that~\eqref{eqn:psi2est0} holds.

To estimate $\mathscr{L}\psi_2$, we again fix $i \in \{ 1,2,\ldots, N\}$ and $\ell\in \{ 1,2,\ldots, k \}$ and estimate $\mathscr{L} \psi_i^\ell$, showing that for each $\epsilon, \alpha_2, K_*>0$, we may pick $p_*>0$ large enough, then $U_*>0$ large enough and then $\xi_*>0$ large enough so that the estimate
\begin{align}
\label{eqn:psilmest}
\mathscr{L} \psi_i^\ell (q,p, \xi) \leq - \alpha_2 g_2 |\xi| + \epsilon |p|^2 + \epsilon + \epsilon f_0 |\xi| \|p\|_m^2 + \epsilon f_0 |\xi|
\end{align}
holds on $\mathcal{X}$.  As in the previous lemma, we again break up $\mathscr{L} \psi_i^\ell$ as
\begin{align*}
\mathscr{L} \psi_i^\ell= \mathscr{T}_1 \psi_i^\ell + \mathscr{A} \psi_i^\ell + \mathscr{T}_2 \psi_i^\ell
\end{align*}
where $\mathscr{T}_1, \mathscr{T}_2$ and $\mathscr{A}$ were introduced either just above or in equation~\eqref{eqn:opdef}.  Even though $F$ in the definition of $\psi_i^\ell$ depends on $i,\ell$, we will suppress this dependence for simplicity.  Beginning with $\mathscr{T}_1 \psi_i^\ell$ observe that 
\begin{align*}
\mathscr{T}_1 \psi_i^\ell &= \sum_{n, j} \bigg\{- g_2 F'  \frac{p_n^j \partial^2_{q_n^j q_i^\ell} U}{\sqrt{2\gamma/\beta |\xi+ \gamma/m_i |}} + \sum_{s,t} h_1 h_2 h_3' F \frac{2 p_n^j \partial_{q_s^t} U \partial^2_{q_n^j q_s^t }U}{U_* (\xi^2 +1)}\bigg\}\\
&\leq g_2  \frac{|F'||p| |\nabla^2 U|}{\sqrt{2\gamma/\beta  |\xi + \gamma/m_i |}} + 2 h_1 h_2 |h_3'| \frac{|F| |\nabla U| |\nabla^2 U| |p| }{U_* (\xi^2 +1)}\\
&\leq g_2  \frac{|F'||p| |\nabla^2 U|}{\sqrt{2\gamma/\beta  |\xi + \gamma/m_i |}}1_{\{U\leq R\}} +g_2  \frac{|F'||p| |\nabla U|^\zeta}{\sqrt{2\gamma/\beta  |\xi + \gamma/m_i |}} \frac{|\nabla^2 U|}{|\nabla U|^\zeta}1_{\{ U \geq R\}} \\
&\qquad + 2 h_1 h_2 |h_3'| \frac{|F| |\nabla U| |\nabla^2 U| |p| }{U_* (\xi^2 +1)} 1_{\{U \leq R\}} + 4 h_1 h_2 |h_3'| |F|  |\nabla U|^{\zeta-1} |p|   \frac{|\nabla^2 U|}{|\nabla U|^\zeta}1_{\{U\geq R\}}  
\end{align*}
where $\zeta\in (1,2)$ is the constant in (A4) of Definition~\ref{def:normpot}.  Applying the asymptotics in (A4) and those for $F$ in Proposition~\ref{prop:asymp}, for each $\epsilon >0$ we may pick $R>0$ large enough so that 
\begin{align*}
\mathscr{T}_1 \psi_i^\ell &\leq  C_1 g_2  \frac{|p|}{|\xi+ \gamma/m_\ell|^{1/2}} + \epsilon g_2 |p| |\xi|^{1/2} + 2 C_2 h_1 h_2 |h_3'| \frac{|p|}{|\xi|} + \epsilon h_1 h_2 |h_3'| |\xi| |p| 
\end{align*}
for some constants $C_i>0$ depending on $\alpha_2, p_*, U_*, \epsilon, R$.  Thus for all $\epsilon, \alpha_2, p_*, U_* >0$, picking $\xi_*>0$ large enough we can arrive at the inequality
\begin{align}
\mathscr{T}_1 \psi_i^\ell \leq \epsilon + \epsilon f_0 |\xi|  + \epsilon f_0  |\xi| \|p\|_m^2 .  
\end{align}

 Turning now to  $\mathscr{A} \psi_i^\ell$, for each $n, j$ let
\begin{align*}
\mathscr{A}_n^i = (|\xi+ \gamma/m_n| p_n^j- \partial_{q_n^j} U)\partial_{p_n^j} + \frac{\gamma}{\beta} \partial_{p_n^j}^2
\end{align*}
and note 
\begin{align*}
\mathscr{A} \psi_i^\ell&= \mathscr{A}_i^\ell \psi_i^\ell + \sum_{(n,j) \neq (i, \ell)} F \mathscr{A}_n^j g_2\\
&= g_2 \mathscr{A}_{i}^\ell (F) + \frac{2\gamma}{\beta}  \partial_{p_i^\ell}(g_2) \partial_{p_i^\ell} (F)+ \sum_{n,j} F \mathscr{A}_n^j g_2  \\
&= - \alpha_2 g_2 |\xi+ \gamma/m_i| +\sqrt{2\gamma/\beta} h_1 h_2' h_3 F' \frac{2 p_i^\ell |\xi+ \gamma/m_i|^{1/2}}{p_* \sqrt{\xi^2 + 1}} + \sum_{n,j} F \mathscr{A}_n^j g_2 .
\end{align*}
Now,
\begin{align*}
F\mathscr{A}_n^j g_2 &= F h_1 h_2' h_3 \frac{2p_n^j}{p_*\sqrt{\xi^2 + 1}} ( |\xi+\gamma/m_n|p_n^j - \partial_{q_n^j} U) +  \frac{\gamma}{\beta}Fh_1 h_2'' h_3 \frac{4(p_n^j)^2}{p_*^2(\xi^2 + 1)}  \\
&\qquad + \frac{\gamma}{\beta} F h_1 h_2' h_3 \frac{2}{p_*\sqrt{ \xi^2 + 1}} \\
& \leq C_1 h_1 |h_2'| h_3| |p| |\xi+ \gamma/m_n|^{1/2}\log|\xi+ \gamma|   + C_2(h_1 |h_2'| h_3 + h_1 |h_2''| h_3)\frac{\log|\xi+ \gamma/m_n|}{\sqrt{\xi^2 + 1}} 
\end{align*}
for some constants $C_i>0$ depending on $\alpha_2, p_* , U_*$.  Thus for every $\epsilon, \alpha_2, p_*, U_*, K_* >0$, picking $\xi_* >0 $ large enough produces the global estimate
\begin{align}
\mathscr{A} \psi_i^\ell &\leq - \alpha_2 g_2 |\xi|   +\epsilon  f_0 \|p\|_m^2 |\xi|  + \epsilon f_0 |\xi|  + \epsilon |p|^2 + \epsilon.   
\end{align}

Lastly, we consider $\mathscr{T}_2 \psi_i^\ell$ and note
\begin{align*}
\mathscr{T}_2 \psi_i^\ell &= a^{-1}(\|p\|^2_m - k N k_B T) \times \bigg\{ h_1' h_2 h_3 F - h_1 h_2' h_3 F \frac{ \xi |p|^2}{p_*(\xi^2 +1)^{3/2}}\\
& - 2h_1 h_2 h_3' F \frac{\xi |\nabla U|^2 }{U_* (\xi^2+ 1)^{2}} - \frac{g_2}{2 |\xi + \gamma/m_i |} F' \frac{|\xi+\gamma/m_i |^{1/2}}{\sqrt{2\gamma/\beta}} \big(p_i^m -\frac{\partial_{q_i^\ell} U}{|\xi+ \gamma/m_i|}\big)  \\
&\qquad \qquad \qquad - \frac{g_2 F'\partial_{q_i^\ell} U}{\sqrt{2\gamma/\beta} |\xi+\gamma/m_i| ^{3/2}} \bigg \}.
\end{align*}
Hence, 
\begin{align*}
\mathscr{T}_2 \psi_\ell^m &\leq a^{-1}(\|p\|^2_m + kN k_B T) \bigg\{ |h_1'| h_2 h_3 |F|+ 2 h_1 |h_2'| h_3 \frac{|\xi|}{\xi^2 + 1} + 6 h_1 h_2 |h_3'| \frac{|\xi|}{\xi^2 + 1}\\
&\qquad \qquad \qquad \qquad \qquad  \qquad + C_1 \frac{g_2}{|\xi + \gamma/m_\ell|} + C_2 g_2 \bigg\}
\end{align*}
for some constant $C_1>0$ depending on $\alpha_2$ and some constant $C_2>0$ depending on $\alpha_2, U_*$. 
Picking $\xi_* >0$ large enough as before, we can thus arrive at the estimate
\begin{align}
\mathscr{T}_2 \psi_i^\ell \leq \epsilon f_0 |\xi| \|p\|^2_m + \epsilon f_0 |\xi|.  
\end{align}
Putting the estimates together we arrive at the claimed inequality~\eqref{eqn:psilmest}.  

Finally, we turn to estimating $\frac{\gamma}{\beta}|\nabla_p(\psi_2)|^2$.  Note that for each $i, j \in \{1,2,\ldots, N\}$ and each $n, \ell \in \{ 1,2,\ldots, k\}$ we have that 
\begin{align*}
\partial_{p_j^{n}} ( \psi_i^\ell) = \delta_{(i, \ell)}(j,n) g_2 F' \frac{|\xi + \gamma/m_i |^{1/2}}{\sqrt{2\gamma/\beta}} + h_1 h_2' h_3 F \frac{2 p_j^n }{p_* \sqrt{\xi^2 + 1}}
\end{align*}
where $\delta_{(i,\ell)}(j,n)=1$ if $(j,n)=(i, \ell)$ and $0$ otherwise.  Hence
\begin{align*}
|\partial_{p_j^n} (\psi_2)| \leq g_2 \frac{2 \alpha_2 D_\text{max}}{\sqrt{2\gamma/\beta}}   |\xi+ \gamma/m_j|^{1/2} + h_1 |h_2'| h_3 \frac{4 (C\log|\xi|+ D)}{p_*^{1/2} (\xi^2 + 1)^{1/4}}
\end{align*}
for some constants $C,D>0$.  
Thus applying Proposition~\ref{prop:asymp} for every $\epsilon >0$ by picking $\xi_*>0$ large as before we find that 
\begin{align*}
|\partial_{p_j^n} (\psi_2)| \leq  g_2 \frac{2 \alpha_2 D_\text{max}}{\sqrt{2\gamma/\beta}}   |\xi+ \gamma/m_j|^{1/2} + \epsilon 
\end{align*}
for all $j,n$.  Hence applying this bound and by possibly increasing $\xi_*>0$ if necessary we arrive at the claimed estimate~\eqref{eqn:gradpsi2}.    

\end{proof}

We now combine the previous estimates to prove Theorem~\ref{thm:maintech}.  Whenever we need to adjust the parameters as done in the statement of Lemma~\ref{lem:psi2}, in the proof below we will simply say ``by adjusting the parameters".   
\begin{proof}[Proof of Theorem~\ref{thm:maintech}]
Let $\beta_0>0$ satisfy~\eqref{eqn:beta0bound} and fix $\epsilon_0 \in (0, \beta_0)$.  Pick $\delta >0$ such that 
\begin{align*}
\delta = \min \bigg\{ \frac{\epsilon_0}{3}, \frac{\beta_*}{2} - \frac{\beta_0}{2}\bigg\}  
\end{align*}
 where we recall the constant $\delta>0$ was introduced above~\eqref{eqn:ham}.  Note that by adjusting the parameters, the estimates~\eqref{eqn:psi0bound0},~\eqref{eqn:psi1est0} and~\eqref{eqn:psi2est0} together imply part (i) of the result.  

To establish condition (ii) of the result, first note that 
\begin{align}
\mathscr{L}(\beta_0 H) = - \gamma \beta_0 \sum_{i=1}^N \frac{|p_i|^2}{m_i^2}- \frac{\beta_0}{\beta}  kN  \xi + \frac{\beta_0}{\beta}  K_1,
\end{align}
where $K_1 :=\gamma \sum_{i} m_i^{-1}>0$, and
\begin{align}
 \nabla_{p_i} (\beta_0 H) =\beta_0  \frac{p_i}{m_i}.  
\end{align}
Let $V=\beta_0 H  + \psi_0 + \psi_1 + \psi_2$.  Combining relations~\eqref{eqn:psi0bound1} and \eqref{eqn:psi0bound2} with Lemma~\ref{lem:psi1} and Lemma~\ref{lem:psi2}, for each $\epsilon>0$ we may vary the parameters so that 
\begin{align}
\label{eqn:LVineq}
\mathscr{L}V(q,p, \xi)& \leq  - \gamma \beta_0(1- \epsilon) \sum_{i=1}^N \frac{|p_i|^2}{m_i^2} - \frac{\beta_0}{\beta} kN \xi  - f_0 \delta(1-\epsilon) |\xi| \|p\|_m^2  \\
\nonumber &\qquad - g_1 \alpha_1 \sqrt{\xi^2 +1} - \alpha_2 k N g_2 |\xi| + f_0 \frac{\delta + \epsilon}{\beta} kN |\xi| + \frac{\beta_0}{\beta} K_1 + \frac{\delta+ \epsilon}{\beta} k N    
\end{align}
as well as the inequality 
\begin{align*}
\frac{\gamma}{\beta}| \nabla_p V|^2 &\leq\frac{\gamma}{\beta}\bigg\{ |\nabla_p(\beta_0 H)|^2 + |\nabla_p(\psi_1)|^2 + |\nabla_p(\psi_2)|^2 + 2 |\nabla_p(\beta_0 H) | |\nabla_p(\psi_1)| \\
&\qquad + 2 |\nabla_p(\beta_0 H)| |\nabla_p(\psi_2)| + 2|\nabla_p(\psi_1) | |\nabla_p(\psi_2)|\bigg\}\\
& \leq \frac{\gamma \beta_0^2 }{\beta}\sum_{i=1}^N \frac{|p_i|^2}{m_i^2}  + 2  kN   \alpha_2^2 D_\text{max}^2 g_2     |\xi| + \epsilon  f_0 |\xi| + \epsilon f_0 \| p \|^2_m |\xi|  + \epsilon  + \epsilon |p|^2.   
\end{align*}
Combining the previous inequality with~\eqref{eqn:LVineq} and adjusting $\epsilon>0$ and the parameters accordingly produces the estimate
\begin{align}
\label{eqn:LVGV1}&\mathscr{L}V(q,p, \xi) + \frac{\gamma}{\beta}|\nabla_p(V)|^2 \\
\nonumber &\qquad \leq -\gamma \beta_0 (1- \beta_0/\beta-\epsilon) \sum_{i=1}^N \frac{|p_i|^2}{m_i^2} - \frac{\beta_0}{\beta} kN  \xi - f_0 \delta (1-\epsilon) |\xi| \|p\|_m^2\\
\nonumber &\qquad - g_1 \alpha_1 \sqrt{\xi^2 + 1} - g_2 \alpha_2  kN (1-2 \alpha_2 D_\text{max}^2) |\xi|  + f_0 \frac{\delta + \epsilon}{\beta} kN |\xi|  + \frac{\beta_0}{\beta} K_1+\frac{\delta+ \epsilon}{\beta} kN.    
\end{align}    
Let $\alpha>0$ and pick the rest of the parameters as follows:
\begin{align}
\label{eqn:paramc}
&\epsilon < \min\bigg\{ \frac{1}{2}- \frac{\beta_0}{2\beta}, \frac{1}{2}, \frac{\beta}{8D_\text{max}^2} - \beta_0 -\delta \bigg\}, \,\, \alpha_1 = 2 \alpha + 2\frac{\beta_0}{\beta} K_1 + 2\frac{\beta_0 +\delta + \epsilon}{\beta} kN, \\\ &\alpha_2 = \frac{1}{4 D_\text{max}^2} \qquad \qquad  K_* = \frac{\beta \alpha}{\beta_0 kN}+  \frac{K_1}{kN} + \frac{ \beta_0 + \delta + \epsilon}{\beta_0}.
\end{align}
Applying these choices to the estimate~\eqref{eqn:LVGV1} then gives  
\begin{align*}
\mathscr{L}V(q,p, \xi) + \frac{\gamma}{\beta}|\nabla_p(V)|^2 &\leq - \frac{\gamma \beta_0}{2}(1- \beta_0/\beta) \sum_{i=1}^N \frac{|p_i|^2}{m_i^2} - \frac{\beta_0}{\beta} k N |\xi|(1-f_0)- \frac{f_0}{2}\delta |\xi| \|p\|_m^2\\
&\qquad - g_1 \frac{\alpha_1}{2}|\xi| - g_1 \frac{\alpha_1}{2} - g_2 \frac{kN}{8D_\text{max}^2} |\xi|  + f_0 \frac{\beta_0+ \delta + \epsilon}{\beta} kN  |\xi| \\
&\qquad  + \frac{\beta_0}{\beta} K_1 + \frac{ \beta_0 + \delta + \epsilon}{\beta} kN
\end{align*} 
where we have left $\alpha_1$ as is for brevity of mathematical expression.  First observe that if $\xi \geq K_*$, then 
\begin{align*}
\mathscr{L}V(q,p, \xi) + \frac{\gamma}{\beta}|\nabla_p(V)|^2 &\leq  - \frac{\beta_0}{\beta} kN K_* +  \frac{\beta_0}{\beta} K_1 + \frac{\beta_0 + \delta + \epsilon}{\beta} kN \leq -\alpha.    
\end{align*} 
Also note that 
\begin{align*}
\mathscr{L}V(q,p, \xi) + \frac{\gamma}{\beta}|\nabla_p(V)|^2 &\leq  - \frac{\gamma \beta_0}{2}(1- \beta_0/\beta) \sum_{i=1}^N \frac{|p_i|^2}{m_i^2}- \frac{f_0}{2}\delta |\xi| \|p\|_m^2 \\
&\qquad + f_0 \frac{\beta_0+ \delta + \epsilon}{\beta} kN  |\xi| + \frac{\beta_0}{\beta} K_1 + \frac{\beta_0 + \delta + \epsilon}{\beta} kN.  
\end{align*}
Hence if $c= \min_i m_i$ and $|p|\geq P>0$ for any $P>0$ sufficiently large we have 
then we have 
\begin{align*}
\mathscr{L}V(q,p, \xi) + \frac{\gamma}{\beta}|\nabla_p(V)|^2 &\leq -\alpha.  
\end{align*}
Thus now suppose that $|p| \leq P$ and $-\xi_*-3\leq \xi \leq K_*$.  This means that both $|p|$ and $\xi$ are both  bounded.  Thus the only possibility for $H\rightarrow \infty$ is if $U(q)\rightarrow \infty$.  Thus, for $U(q)$ large enough in this region, $g_1=1$, $f_0=1$ and  
\begin{align*}
\mathscr{L}V(q,p, \xi) + \frac{\gamma}{\beta}|\nabla_p(V)|^2 &\leq - \frac{\alpha_1}{2}|\xi| - \frac{\alpha_1}{2} +
 \frac{\beta_0+ \delta + \epsilon}{\beta} kN  |\xi| \\
 &\qquad + \frac{\beta_0}{\beta} K_1 + \frac{\beta_0 + \delta + \epsilon}{\beta} kN \\
&\leq -\alpha
\end{align*}  
where in the last inequality we used the choice of $\alpha_1$ in~\eqref{eqn:paramc}.  Finally, if $|p|\leq P$ and $\xi \leq -\xi_*-3$, then $g_2=1$, $f_0=1$ and 
\begin{align*}
\mathscr{L}V(q,p, \xi) + \frac{\gamma}{\beta}|\nabla_p(V)|^2 &\leq - \frac{kN }{8D_\text{max}^2} |\xi| +  \frac{\beta_0+ \delta + \epsilon}{\beta} kN  |\xi| \\
&\qquad  + \frac{\beta_0}{\beta} K_1 + \frac{ \beta_0 + \delta + \epsilon}{\beta} kN
\end{align*} 
By choice of $\delta, \epsilon>0$,  we observe that $\frac{\beta_0+ \delta + \epsilon}{\beta}< \frac{1}{8D_\text{max}^2}$, so by increasing $\xi_*>0$ is necessary, we also arrive at the estimate
\begin{align*}
\mathscr{L}V(q,p, \xi) + \frac{\gamma}{\beta}|\nabla_p(V)|^2 &\leq -\alpha
\end{align*}
in the region $|p| \leq P$ and $\xi\leq -\xi_*-3$.  This finishes the proof.  
\end{proof}

\section{Smoothing and Support Properties} 
\label{sec:sup_con}

Here we establish conclusions (i) and (ii) of Proposition~\ref{prop:suppreg} separately.

\begin{proof}[Proof of Proposition~\ref{prop:suppreg} (i)]
We apply Corollary~7.2 of~\cite{RB_06} and check that H\"{o}rmander's bracket condition, as stated in relation (162) of~\cite{RB_06}, is satisfied on $\mathcal{X}$.  See~\cite{Hor_67} for H\"{o}rmander's original statement and proof.  Letting $\mathscr{L}^*$ denote the formal $L^2$-adjoint of $\mathscr{L}$, this will then ensure hypoellipticity of the operators $\mathscr{L}, \mathscr{L}^*, \partial_t \pm \mathscr{L}, \partial_t \pm \mathscr{L}^*$ on the respective domains $\mathcal{X}, \mathcal{X}, (0, \infty) \times \mathcal{X}, (0, \infty) \times \mathcal{X}$.  For $i\in \{ 1,2, \ldots, N\}$ and $\ell\in \{ 1, 2, \ldots, k\}$, let $X_i^\ell = \partial_{p_i^\ell} $ and define
\begin{align*}
X_0= \sum_{i=1}^N \frac{p_i}{m_i}\cdot \nabla_{q_i} - \sum_{i=1}^N \big( \xi + \frac{\gamma}{m_i}\big) p_i \cdot \nabla_{p_i}   - \nabla U \cdot \nabla_p + a^{-1}(\|p\|_m^2 - kN/\beta) \partial_\xi .
\end{align*}
Letting $[A,B]=AB-BA$ denote the commutator of operators $A$ and $B$, we find that 
\begin{align*}
[X_i^\ell, X_0]&=m_{i}^{-1} \partial_{q_i^\ell} - (\xi + \gamma/m_i) \partial_{p_i^\ell} + \frac{2}{a m_i} p_{i}^\ell \partial_\xi 
\end{align*}
and $[X_i^\ell, [X_i^\ell, X_0]] = \frac{2}{m_i} \partial_\xi$.  Therefore, the list of vector fields
\begin{align*}
&X_i^\ell, \qquad \qquad \qquad  i=1,2, \ldots, N, \,\, \ell=1,2,\ldots, k\\
&[X_i^\ell, X_0 ],\qquad  \qquad i=1,2, \ldots, N, \,\,  \ell=1,2, \ldots, k\\
&[X_i^\ell, [X_i^\ell, X_0]],\qquad  i=1,2, \ldots, N, \,\, \ell=1,2, \ldots, k
\end{align*}  
has full rank at every point $x\in \mathcal{X}$.    
\end{proof}

We next turn to the proof of Proposition~\ref{prop:suppreg} (ii) which relies on the support theorems~\cite{SV_72, SV_73}.  That is, to equation~\eqref{eqn:main} we associate a deterministic control problem on $\mathcal{X}$
\begin{align}
\label{eqn:controlsystem}
\dot{Q}_i&= P_i\\
\nonumber \dot{P}_i&=  -\Xi P_i - \frac{\gamma}{m_i} P_i - \nabla_{Q_i} U(Q) + \sqrt{2\gamma/\beta}\,  \eta_i \\
\nonumber  \dot{\Xi}&= \sum_{i=1}^N \frac{|P_i|^2}{a m_i}  -  \frac{kN}{a\beta}   
\end{align}       
where $\eta=(\eta_i)$ is a piecewise continuous $(\R^k)^N$-valued control and $\beta=1/(k_B T)$.  Intuitively for a fixed such $\eta$, the solution of~\eqref{eqn:controlsystem} represents an approximate sample trajectory of the solution of~\eqref{eqn:main}.  The support theorems~\cite{SV_72, SV_73} make this intuition precise.  In particular, for $x\in \mathcal{X}$ and $t>0$ define $A(x,t)$ to be the set of points $y\in \mathcal{X}$ such that there exists a piecewise continuous $(\R^k)^N$-valued control $\eta= (\eta_i)$ for which the solution of~\eqref{eqn:controlsystem} exists on the time interval $[0,t]$ in $\mathcal{X}$ and has $(Q(0), P(0), \Xi(0))=x$ and $(Q(t), P(t), \Xi(t)) =y$.  Then the support theorems~\cite{SV_72, SV_73} imply that for every $x\in \mathcal{X}$ and every $t>0$
\begin{align}
\label{eqn:suppeq}
\supp \mathscr{P}_t(x, \, \cdot \,) = \text{closure}(A(x,t)).  
\end{align}      
Thus the problem of finding points in $\supp \mathscr{P}_t(x, \, \cdot \,)$ can be cast in terms finding points reachable from the system~\eqref{eqn:controlsystem} at exactly time $t>0$ started at $x\in \mathcal{X}$ as the controls vary through the class of piecewise continuous functions. 

\begin{remark}
It is worth noting that solving the control problem above is slightly more involved than the one in the case of Langevin dynamics with uniformly elliptic noise in the momentum directions.  One can see the difference between the two cases almost immediately, as the process solving~\eqref{eqn:main} is not fully supported in $\mathcal{X}$ instantaneously.  Indeed, for all $t\geq 0$
\begin{align*}
\xi(t) \geq \xi(0) - ta^{-1} kN/\beta \qquad \P-\text{a.s.}  
\end{align*}      
In other words, the $\xi$ process is only allowed to decrease so fast, hence restricting access to points at a given time $t>0$ sufficiently far to the left of where it started.  Although from this observation one is tempted to conjecture that for a given $x=(q,p,\xi) \in \mathcal{X}$ and $t>0$
\begin{align*}
\supp \mathscr{P}_t(x, \, \cdot \,) = \{(q',p', \xi') \in \mathcal{X} \, : \,  \xi' \geq \xi  - ta^{-1} k_B T kN\},
\end{align*}
this is false.  Consult the statement of Proposition~\ref{prop:suppreg} (ii) to see what precisely is claimed.  Thus determining the supports of the transitions, and hence solving the control problem~\eqref{eqn:controlsystem}, is more subtle.     
\end{remark}

While there are other methods, such as those from geometric control theory, that could prove useful in analyzing the problem above (see, for example, \cite{GHM_17, Jur_97} and the Agrachev-Sarachev approach as outlined in the infinite-dimensional setting in~\cite{Shi_14}), we choose to prove Proposition~\ref{prop:suppreg} (ii) by an essentially explicit construction.  As seen below, we can re-cast the control problem as a calculus of variations problem.        
  
\begin{proof}[Proof of Proposition~\ref{prop:suppreg} \emph{(ii)}]
Let $x=(q,p, \xi)\in \mathcal{X}$ and $t>0$.  By the support theorems~\cite{SV_72, SV_73}, it suffices to prove that 
\begin{align}
\label{eqn:suppP}
\text{closure}(A(x,t)) = \mathcal{A}(x,t)
\end{align}  
where $\mathcal{A}(x,t)$ is as in~\eqref{eqn:suppset}.  To see the inclusion $``\subseteq"$ in~\eqref{eqn:suppP}, let $\eta=(\eta_i)$ be an arbitrary piecewise continuous $(\R^k)^N$-valued control such that the solution of~\eqref{eqn:controlsystem} with $(Q(0), P(0), \Xi(0)) = (q,p, \xi)$ exists in $\mathcal{X}$ for all times on $[0,t]$.  Letting $q'= Q(t)$ and $\xi'=\Xi(t)$, note by Jensen's inequality  
\begin{align*}
L^2_{q, q'} \leq \bigg(\int_0^t \| P(u) \|_m  \, du\bigg)^2 \leq t \int_0^t \| P(u) \|_m^2 \, du.  
\end{align*} 
Consequently, 
\begin{align*}
\xi' &= \xi+  a^{-1} \int_0^t \| P(u) \|_m^2 \, du - t a^{-1} k_B T k N\\
& \geq  \xi + \frac{L^2_{q, q'}}{ta}  - t a^{-1} k_B T k N.  
\end{align*}
This establishes the claimed inclusion.  For the other inclusion $``\supseteq"$ in~\eqref{eqn:suppP}, since $\mathcal{O}$ is open let $\epsilon >0$ be small enough so that $B_\epsilon(q) \subseteq \mathcal{O}$ and define $X_\epsilon = B_\epsilon(q) \times (\R^k)^N \times \R$.  It suffices to show that 
\begin{align*}
\text{closure}(A(x,t)) \cap X_\epsilon  \supseteq \mathcal{A}(x,t) \cap X_\epsilon. 
\end{align*}
Note that this allows us to ``convexify" the problem; that is, for any $q' \in B_\epsilon(q)$, $L_{q,q'} = \| q- q'\|_m$.  Let $(q', p', \xi') \in X_\epsilon$ and $t>0$.  For a small parameter $\delta\in (0, t/2)$, consider the following piecewise linear curve $\phi_\delta:[0, t] \rightarrow (\R^k)^N$ given by 
\begin{align*}
\phi_\delta(u) = \begin{cases}
q + u p & \text{ if } 0 \leq u \leq \delta\\
\ell_u(q+\delta p, q' -\delta p') & \text{ if } \delta \leq u \leq t- \delta \\
q' + (u-t) p' & \text{ if } t-\delta \leq u \leq t.  
\end{cases}
\end{align*}
where $u\mapsto \ell_u(q+\delta p, q'-\delta p')$ linearly interpolates between the points $q+ \delta $ at time $u=\delta$ and $q'-\delta p'$ at time $u=t-\delta$.  Observe that for any $\delta >0$ sufficiently small, $\phi_\delta([0,t]) \subseteq B_\epsilon(q)$ and that for every $\delta >0$, $\phi_\delta(0)= q, \dot{\phi}_\delta(0)= p, \phi_\delta(t)=q', \dot{\phi}_\delta(t)= p'$.  Moreover, observe that as $\delta \rightarrow 0$  
\begin{align*}
\int_0^t \| \dot{\phi}_\delta(u) \|_m^2 \, du \rightarrow \frac{\| q - q'\|_m^2}{t} = \frac{L^2_{q,q'}}{t}.  
\end{align*}  
Thus by picking $\eta= (\eta_{i, \delta})$ to satisfy the second equation in~\eqref{eqn:controlsystem} with this choice of $(Q,P)=(\phi_\delta, \dot{\phi}_\delta)$ proves that all points $(q', p', \xi') \in X_\epsilon$ with $$\xi' = \xi + (a t)^{-1}L^2_{q,q'} - t a^{-1}k_B T kN $$ belong to $\text{closure}(A(x,t))$.  To get the remaining points, first suppose that $q\neq q'$.  For $s\in (0,t]$, define
\begin{align*}
\phi_\delta^s(u) = \begin{cases}
q + u p & \text{ if } 0 \leq u \leq \delta\\
\ell_u(q+\delta p, q' -\delta p') & \text{ if } \delta \leq u \leq s- \delta \\
q'- \delta p' & \text{ if } s-\delta \leq u \leq t-\delta\\
q' + (u-t) p' & \text{ if } t-\delta \leq u \leq t  
\end{cases}
\end{align*} 
where in the above, the $u_\mapsto \ell_u(q_1, q_2)$ is the line segment connecting $q_1$ and $q_2$ at times $u=\delta$ and $u=s-\delta$.  
Note that for every $s\in (0, t]$, as $\delta \rightarrow 0$ 
\begin{align*}
\int_0^t \|\dot{\phi}^s_\delta (u) \|_m^2 \, du&\rightarrow \frac{\| q- q'\|_m^2}{s} + \|p'\|_m^2 (t-s):=f(s) .  
\end{align*}   
Since $f$ is a continuous function of $s\in (0,t]$ with $f(t) = \|q-q'\|_m^2/t$ and $\lim_{s\downarrow 0} f(s)=\infty$, we can apply the Intermediate Value Theorem to see that all points $(q', p', \xi') \in X_\epsilon$ with $q'\neq q$ satisfying
\begin{align*}
\xi' > \xi + (a t)^{-1} L_{q,q'}^2 - t a^{-1} k_B T kN
\end{align*} 
belong to $\text{closure}(A(x,t))$.  Since $\text{closure}(A(x,t))$ is closed the result follows.  
\end{proof}

\section*{Acknowledgements}
The author is grateful to Jonathan Mattingly for originally suggesting the idea for the project and acknowledges fruitful conversations about the topic of this paper with Nathan Glatt-Holtz, Scott McKinley and Hung Nguyen.  The author is supported in part by grant DMS-1612898 from the National Science Foundation
\section*{Appendix}

Here we use Theorem~\ref{thm:lyapexist} and Proposition~\ref{prop:suppreg} to conclude Theorem~\ref{thm:main}.  What follows is fairly standard but we provide the details for completeness.  We first translate Theorem~\ref{thm:lyapexist} and Proposition~\ref{prop:suppreg} to the following two corollaries which allow us to better connect with the setup in~\cite{HM_08}.  

\begin{corollary}
\label{cor:lyap}
Let $\beta_0>0$ satisfy~\eqref{eqn:beta0bound}, fix $\epsilon\in (0, \beta_0)$ and let $W\in C^2(\mathcal{X})$ and $R, \alpha, K>0$ satisfy \emph{(I)} and \emph{(II)} in Theorem~\ref{thm:lyapexist}.  Then for all $t\geq 0$ and all $x\in \mathcal{X}$
\begin{align}
\mathscr{P}_t W(x) \leq e^{-\alpha t} W(x) + K/\alpha.  
\end{align}  
\end{corollary}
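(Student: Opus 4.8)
The plan is to obtain the bound directly from the pointwise Lyapunov inequality \emph{(II)} of Theorem~\ref{thm:lyapexist}, namely $\mathscr{L}W \leq -\alpha W + K$, via Dynkin's formula combined with a localization argument. Localization is needed because $W$ grows exponentially in the Hamiltonian and so, a priori, $\mathscr{P}_tW(x)=\E_x W(x(t))$ could be infinite; the localization simultaneously furnishes the finiteness and the estimate. Throughout, fix $x\in\mathcal{X}$ and $t\geq 0$, and recall from Lemma~\ref{lem:exist} that the solution $x(s)$ started at $x$ is defined for all $s\geq 0$ almost surely. Set $\tau_n=\inf\{s\geq 0 : x(s)\notin\mathcal{X}_n\}$ with $\mathcal{X}_n$ as in~\eqref{eqn:Xn}; then $\tau_n\uparrow\infty$ as $n\to\infty$ almost surely.

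Since $W\in C^2(\mathcal{X})$, Itô's formula applied to $s\mapsto e^{\alpha s}W(x(s))$ gives, for $s\leq\tau_n$,
\[
d\big(e^{\alpha s}W(x(s))\big)=e^{\alpha s}\big(\alpha W+\mathscr{L}W\big)(x(s))\,ds+e^{\alpha s}\,dM_s,
\]
where $M_s=\sqrt{2\gamma k_BT}\sum_{i=1}^N\int_0^s \nabla_{p_i}W(x(u))\cdot dB_i(u)$ is a local martingale. On the stopped interval $[0,t\wedge\tau_n]$ the coefficients of~\eqref{eqn:main} and the functions $W$, $\nabla_pW$ are all bounded, since $\overline{\mathcal{X}_n}$ is compact and these functions are continuous; hence $s\mapsto e^{\alpha (s\wedge\tau_n)}M_{s\wedge\tau_n}$ is a genuine mean-zero martingale. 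Taking expectations and using \emph{(II)} in the form $\alpha W+\mathscr{L}W\leq K$ yields
\[
\E_x\big[e^{\alpha(t\wedge\tau_n)}W(x(t\wedge\tau_n))\big]\leq W(x)+K\,\E_x\!\int_0^{t\wedge\tau_n}\!e^{\alpha s}\,ds\leq W(x)+\frac{K}{\alpha}\big(e^{\alpha t}-1\big).
\]

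Now let $n\to\infty$. Because $\tau_n\uparrow\infty$ almost surely, $e^{\alpha(t\wedge\tau_n)}W(x(t\wedge\tau_n))\to e^{\alpha t}W(x(t))$ almost surely, and since $W>0$ Fatou's lemma gives
\[
\E_x\big[e^{\alpha t}W(x(t))\big]\leq\liminf_{n\to\infty}\E_x\big[e^{\alpha(t\wedge\tau_n)}W(x(t\wedge\tau_n))\big]\leq W(x)+\frac{K}{\alpha}\big(e^{\alpha t}-1\big).
\]
Dividing by $e^{\alpha t}$ and recalling $\mathscr{P}_tW(x)=\E_xW(x(t))$, we conclude
\[
\mathscr{P}_tW(x)\leq e^{-\alpha t}W(x)+\frac{K}{\alpha}\big(1-e^{-\alpha t}\big)\leq e^{-\alpha t}W(x)+\frac{K}{\alpha},
\]
which is the claim; in particular $\mathscr{P}_tW(x)<\infty$ for every $x$ and $t$.

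The only delicate point is the localization step: one must verify that the stochastic-integral term is an honest mean-zero martingale on $[0,t\wedge\tau_n]$ rather than merely a local martingale, which holds because $\nabla_pW$ is bounded on the compact set $\overline{\mathcal{X}_n}$; and that the monotone convergence $\tau_n\uparrow\infty$ (hence the applicability of Fatou) comes from Lemma~\ref{lem:exist}. Everything else is a routine computation. Alternatively, one could cite a standard Foster--Lyapunov comparison theorem (e.g.~\cite{MTIII, Kh_12}) to pass from \emph{(II)} to the stated exponential bound, but the stopping-time argument above is self-contained and mirrors the one already used in the excerpt to establish Lemma~\ref{lem:exist}.
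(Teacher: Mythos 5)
Your proof is correct and follows essentially the same route as the paper: localize with stopping times, apply Itô/Dynkin to $e^{\alpha s}W(x(s))$ using (II), and pass to the limit via Fatou's lemma together with non-explosivity. The only cosmetic difference is that the paper stops at level sets of $W$ (i.e.\ $\sigma_n=\inf\{s:W(x(s))>n\}$) rather than at exit times from $\mathcal{X}_n$, which changes nothing of substance.
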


\begin{proof}
For $n\in \N$, define $\sigma_n= \inf\{ t>0 \, : \, W(x(t)) >n\}$ and let $\sigma_n(t)= t\wedge \sigma_n$.  By construction of $W$, we see that $\E_x e^{\alpha \sigma_n(t)} (W(x_{\sigma_n(t)})-K/\alpha)\leq W(x)- K/\alpha$, which in turn implies the estimate
\begin{align}
\label{eqn:lyapcor1}
\E_x e^{\alpha \sigma_n(t)} W(x_{\sigma_n(t)}) \leq W(x)+ \E_x e^{\alpha \sigma_n(t)} K/\alpha.  
\end{align}       
Note that $\sigma_n\uparrow \infty$ $\P_x$-almost surely since $W(x)\rightarrow \infty$ as $H(x)\rightarrow \infty$ and $x(t)$ is non-explosive.  Applying Fatou's lemma and monotone convergence to~\eqref{eqn:lyapcor1} finishes the proof.     
\end{proof}

\begin{corollary}
\label{cor:minor}
Let $\beta_0>0$ satisfy~\eqref{eqn:beta0bound}, fix $\epsilon\in (0, \beta_0)$ and let $W\in C^2(\mathcal{X})$ and $R, \alpha, K>0$ satisfy \emph{(I)} and \emph{(II)} in Theorem~\ref{thm:lyapexist}.  For $R>0$, define $$\mathcal{C}_R= \{ x\in \mathcal{X} \, : \, W(x) \leq R\}.$$  Then for each $R>0$, $\mathcal{C}_R$ is compact.  Also, for each $R>0$ large enough and each $t_0>0$, there exists a probability measure $\nu$ on Borel subsets of $\mathcal{X}$ and a constant $c>0$ such that for all $A\in \mathcal{B}(\mathcal{X})$ 
\begin{align}
\inf_{x\in \mathcal{C}_R} \mathscr{P}_{t_0}(x, A) \geq c \nu(A)
\end{align}   
\end{corollary}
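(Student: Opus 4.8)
The plan is to derive the two assertions from estimate~(I) of Theorem~\ref{thm:lyapexist} together with Proposition~\ref{prop:suppreg}; the argument is the standard one showing that sublevel sets of a Lyapunov function are ``small'' sets for a hypoelliptic diffusion whose transition density is continuous and positive on the interior of its support.

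\emph{Compactness of $\mathcal{C}_R$.} By estimate~(I), $W\geq\exp((\beta_0-\epsilon)H)$ on $\mathcal{X}$, so $\mathcal{C}_R\subseteq\{x\in\mathcal{X}:H(x)\leq(\beta_0-\epsilon)^{-1}\log R\}$. Since $H(q,p,\xi)=\tfrac12\|p\|_m^2+U(q)+\tfrac{a\xi^2}{2}$, a bound $H\leq M$ confines $\|p\|_m$ and $|\xi|$ to compact sets and forces $U(q)\leq M$; using property (A2) of a normal potential one checks that $\mathcal{C}_R$ is contained in a compact subset of $\mathcal{X}$, and it is relatively closed in $\mathcal{X}$ because $W$ is continuous, hence $\mathcal{C}_R$ is compact. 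Enlarging $R$ if necessary makes $\mathcal{C}_R$ nonempty, since $W\leq\exp((\beta_0+\epsilon)H)$ forces $\mathcal{C}_R$ to contain a sublevel set of $H$.

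\emph{Construction of the minorizing set.} Fix $t_0>0$. By Proposition~\ref{prop:suppreg}(i), $\mathscr{P}_{t_0}(x,dy)=r_{t_0}(x,y)\,dy$ with $(x,y)\mapsto r_{t_0}(x,y)$ continuous, and by (ii), $\supp\mathscr{P}_{t_0}(x,\cdot)=\mathcal{A}(x,t_0)$ as in~\eqref{eqn:suppset}. Writing $g_x(q')=\xi+(t_0a)^{-1}L^2_{q,q'}-t_0a^{-1}k_BTkN$, we have $\mathcal{A}(x,t_0)=\{(q',p',\xi'):\xi'\geq g_x(q')\}$ and, since $q'\mapsto g_x(q')$ is continuous, $\operatorname{int}\mathcal{A}(x,t_0)=\{(q',p',\xi'):\xi'>g_x(q')\}$. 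Fix once and for all a nonempty open set $G$ with compact closure $\overline G\subseteq\mathcal{O}$, put
\[
\Xi_0=1+\sup\bigl\{g_x(q'):x=(q,p,\xi)\in\mathcal{C}_R,\ q'\in\overline G\bigr\},
\]
which is finite by compactness of $\mathcal{C}_R$ and continuity of $(q,\xi,q')\mapsto g_x(q')$ (the $\mathcal{O}$-distance $L_{\cdot,\cdot}$ being continuous on $\mathcal{O}\times\mathcal{O}$), and set $B=G\times\{p':|p'|<1\}\times(\Xi_0,\Xi_0+1)$. Then $B$ is nonempty, open and bounded, $\overline B\subseteq\mathcal{X}$, and $\overline B\subseteq\operatorname{int}\mathcal{A}(x,t_0)$ for every $x\in\mathcal{C}_R$, because points of $\overline B$ have $q'\in\overline G$ and $\xi'\geq\Xi_0>g_x(q')$.

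\emph{Positivity and conclusion.} The one substantive point is that $r_{t_0}(x,y)>0$ for every $(x,y)\in\mathcal{C}_R\times\overline B$, i.e. that the transition density of the hypoelliptic diffusion~\eqref{eqn:main} is strictly positive on the interior of its support. This follows by standard arguments combining Hörmander's theorem (as verified in the proof of Proposition~\ref{prop:suppreg}(i)) with the support theorems~\cite{SV_72, SV_73}: a control steering $x$ into $\operatorname{int}\mathcal{A}(x,t_0)$ can be chosen so that the linearized control system is controllable, whence the It\^o map is a submersion at that control and the density is positive near the target (equivalently, invoke the strong maximum principle for hypoelliptic operators). Granting this, $(x,y)\mapsto r_{t_0}(x,y)$ is continuous and strictly positive on the compact set $\mathcal{C}_R\times\overline B$, so $c_0:=\inf_{(x,y)\in\mathcal{C}_R\times\overline B}r_{t_0}(x,y)>0$. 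Define $\nu(\cdot)=\operatorname{Leb}(\,\cdot\cap B)/\operatorname{Leb}(B)$ and $c=c_0\operatorname{Leb}(B)\in(0,1]$. Then for every $x\in\mathcal{C}_R$ and $A\in\mathcal{B}(\mathcal{X})$,
\[
\mathscr{P}_{t_0}(x,A)\geq\int_{A\cap B}r_{t_0}(x,y)\,dy\geq c_0\operatorname{Leb}(A\cap B)=c\,\nu(A),
\]
which is the asserted bound. The main obstacle is precisely the strict positivity of $r_{t_0}$ on the interior of the support; everything else is a routine combination of compactness and continuity.
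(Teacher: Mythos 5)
Your compactness argument and your construction of a box $B$ lying in $\operatorname{int}\mathcal{A}(x,t_0)$ uniformly over $x\in\mathcal{C}_R$ are fine, but the step you yourself flag as ``the one substantive point'' is a genuine gap, not a routine citation. Strict positivity of $r_{t_0}(x,y)$ at every point $y$ in the interior of $\supp\mathscr{P}_{t_0}(x,\cdot)$ does \emph{not} follow from Proposition~\ref{prop:suppreg}: H\"{o}rmander's theorem gives smoothness and continuity of the density, and the support theorem gives $\mathscr{P}_{t_0}(x,B_\delta(y))>0$ for every ball around such a $y$, which only yields a point of positive density \emph{somewhere} in each ball, not at $y$ itself. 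For degenerate diffusions the density can in fact vanish at interior points of the support unless one verifies additional structure; the justifications you gesture at (existence of a control at which the end-point map of~\eqref{eqn:controlsystem} is a submersion, or a parabolic strong maximum principle for the hypoelliptic operator) are exactly the nontrivial facts that would need to be proved for this specific system, in which the noise acts only in the $p$-directions and the $\xi$-equation is noise-free. As written, your lower bound $c_0=\inf_{\mathcal{C}_R\times\overline B}r_{t_0}>0$ rests on an unproved claim.

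The paper avoids this issue entirely by splitting the time with Chapman--Kolmogorov: it writes $\mathscr{P}_{t_0}(x,A)=\int_A\int_{\mathcal{X}}r_{t_0/2}(x,y)\,r_{t_0/2}(y,z)\,dy\,dz$, picks a single point $y'=(q',p',\xi')$ with $\xi'$ above $\xi_R+(at_0/2)^{-1}L_R^2-(t_0/2)a^{-1}k_BTkN$ (so that, by Proposition~\ref{prop:suppreg}(ii), $\mathscr{P}_{t_0/2}(x,B_\delta(y'))>0$ for \emph{all} $x\in\mathcal{C}_R$), then picks any $z'$ with $r_{t_0/2}(y',z')>0$ (such a point exists trivially because $r_{t_0/2}(y',\cdot)$ integrates to one), and uses joint continuity of the density to get $r_{t_0/2}\geq\epsilon$ on $B_\delta(y')\times B_\delta(z')$. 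The minorizing measure is normalized Lebesgue measure on $B_\delta(z')$, and the uniform constant comes from continuity and positivity of the map $x\mapsto\mathscr{P}_{t_0/2}(x,B_\delta(y'))$ on the compact set $\mathcal{C}_R$. Note that here only positivity of the \emph{measure} of a ball (which the support theorem does give) and positivity of the density at \emph{some} unspecified point are needed, never pointwise positivity at a prescribed target. To repair your proof, either adopt this two-step splitting or supply an actual proof of the submersion/positivity statement for~\eqref{eqn:controlsystem}; the latter is substantially more work than the former.
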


\begin{proof}
The fact that $\mathcal{C}_R$ is compact for $R>0$ follows since $W(x)\rightarrow \infty$ as $H(x)\rightarrow \infty$, $W\in C^2(\mathcal{X};(0, \infty))$ and since the poetntial $U$ is normal.  Let $R>0$ be large enough so that $\mathcal{C}_R\neq \emptyset$ and fix $t_0>0$.  First observe that for any $A\subseteq \mathcal{X}$ Borel and $x\in \mathcal{X}$ we may write
\begin{align}
\label{eqn:density}
\mathscr{P}_{t_0}(x, A)= \int_A \int_\mathcal{X} r_{\frac{t_0}{2}}(x,y) r_{\frac{t_0}{2}}(y, z) \, dy \, dz
\end{align}   
where we recall that $y\mapsto r_t(x,y)$ denotes the probability density of $\mathscr{P}_t(x, \, \cdot \,)$ with respect to Lebesgue measure on $\mathcal{X}$.  From this expression, the goal is to now use support and regularity properties of the transitions to bound the quantity below by a positive constant times normalized Lebesgue measure on a bounded subset of $\mathcal{X}$.  To this end, since $\mathcal{C}_R$ is compact let 
\begin{align*}
\xi_R = \max_{(q',p', \xi') \in \mathcal{C}_R}    \xi' \qquad \text{ and } \qquad L^2_R= \max_{(q,p, \xi), (q', p', \xi') \in \mathcal{C}_R} L_{q,q'}^2  
\end{align*}
and note that by Proposition~\ref{prop:suppreg} (ii)
\begin{align*}
\mathscr{P}_{\frac{t_0}{2}}(x, B_\delta(y')) >0
\end{align*}
for all $x\in \mathcal{C}_R$, $\delta >0$ and all $y'=(q', p', \xi')\in \mathcal{X}$ with
\begin{align*}
\xi' \geq \xi_R + (a t_0/2)^{-1}  L_R^{2} - \frac{t_0}{2} a^{-1} k_B T kN .  
\end{align*}
Let $y'\in \mathcal{X}$ be any such point satisfying the above.  Clearly, there exists $z'\in \mathcal{X}$ such that $r_{t_0/2}(y', z') >0$.  Employing continuity of the density on $(0, \infty) \times \mathcal{X} \times \mathcal{X}$ and picking $\delta >0$ small enough we can ensure the following bound 
\begin{align*}
r_{\frac{t_0}{2}}(y, z) \geq \epsilon >0 
\end{align*}
for all $(y, z) \in B_\delta(y') \times B_\delta(z')$ where $\epsilon >0$ is a constant.  Hence by way of~\eqref{eqn:density} we obtain for $x\in \mathcal{C}_R$
\begin{align*}
\mathscr{P}_{t_0}(x, A) \geq \epsilon \lambda(B_\delta(z')) \,  \mathscr{P}_{\frac{t_0}{2}}(x, B_\delta(y')) \,  \frac{\lambda(A\cap B_\delta(z'))}{\lambda(B_\delta(z'))} 
\end{align*} 
where $\lambda$ denotes Lebesgue measure.  Since $x\mapsto \mathscr{P}_{\frac{t_0}{2}}(x, B_\delta(y'))$ is continuous and positive on $\mathcal{C}_R$, we infer the existence of a constant $c>0$ such that 
\begin{align*}
\inf_{x\in \mathcal{C}_R} \mathscr{P}_{t_0}(x, A) \geq c\,  \frac{\lambda(A\cap B_\delta(z'))}{\lambda(B_\delta(z'))} 
\end{align*} 
for all $A\subseteq \mathcal{X}$ Borel.  This finishes the proof. 
\end{proof}
   
We now use the previous two corollaries to conclude Theorem~\ref{thm:main}.  
\begin{proof}[Proof of Theorem~\ref{thm:main}]
We first show that the augmented Gibbs measure  $\mu$ defined in relation~\eqref{eqn:Gibbsdef} is an invariant probability measure for the Markov process $x(t)$ satisfying~\eqref{eqn:main}.  Since $U$ is a normal potential, $\mu$ is a probability measure by definition.  Note also that it is a routine calculation to check that
\begin{align*}
\mathscr{L}^* (\exp(-\beta H))=0
\end{align*}
where we recall that $\beta=1/(k_B T)$ and $\mathscr{L}^*$ is the formal $L^2$-adjoint of the generator $\mathscr{L}$.  This in turn implies that $\mu \mathscr{P}_t= \mu$ for all $t\geq 0$.  To see that $\mu$ is unique, Proposition~\ref{prop:suppreg} (i) implies that $(\mathscr{P}_t)_{t\geq 0}$ is a strong Feller Markov semigroup.  Moreover, we claim that $\supp \nu= \mathcal{X}$ for any invariant probability measure $\nu$ for the Markov process $x(t)$.  Uniqueness of $\mu$ will then follow by, for example, Theorem~3.16 of~\cite{HM_06}.   
Supposing that $\nu$ is an invariant probability measure for $x(t)$, there exists $x^*\in\mathcal{X}$ for which $x^*\in \supp \nu$.  By Proposition~\ref{prop:suppreg} (ii), for any $y\in \mathcal{X}$ we may pick $t>0$ large enough so that $y \in \supp \mathscr{P}_t(x^*, \,\cdot \,)$.  Since for any $U, V\in \mathcal{B}(\mathcal{X})$ 
\begin{align*}
\nu(U) = \int_\mathcal{X} \nu(dx) \mathscr{P}_t(x, U)\geq \int_V \nu(dx) \mathscr{P}_t(x, U)
\end{align*}  
it follows that $y\in \supp \nu$.  

To obtain the remaining conclusions in the Theorem~\ref{thm:main}, we seek to apply Theorem~1.2 of~\cite{HM_08} to the embedded Markov chain on $\mathcal{X}$ given by $\mathscr{P}_n^{t_0}:=\mathscr{P}_{n t_0}$ where $t_0>0$ is as in the statement of Corollary~\ref{cor:minor}.  Note that Corollary~\ref{cor:lyap} and Corollary~\ref{cor:minor} together imply Assumption~1 and Assumption~2 of~\cite{HM_08}.  Applying Theorem~1.2 of \cite{HM_08}, there exist constants $C>0$ and $\delta \in (0,1)$ such that 
\begin{align*}
\rho_W(\nu_1 \mathscr{P}_n^{t_0} , \nu_2 \mathscr{P}_n^{t_0}) \leq C \delta^n \rho_W(\nu_1, \nu_2)
\end{align*}  
for all $\nu_i \in \mathcal{M}_W$ and all $n\in \N \cup \{0\}$.  To reintroduce the continuous-time parameter $t>0$ in the bound above, let $t= n t_0 + \epsilon$ for some $\epsilon \in (0,t_0)$ and $n\in \N \cup \{ 0\}$.  Observe that for any $\varphi: \mathcal{X}\rightarrow \R$ measurable with $\| \varphi \|_W \leq 1$, Corollary~\ref{cor:lyap} gives
\begin{align*}
\| \mathscr{P}_\epsilon \varphi \|_W \leq \| \varphi \|_W \sup_{x\in \mathcal{X}} \frac{1+ \mathscr{P}_\epsilon W(x)}{1+ W(x)}\leq C'
\end{align*}    
for some constant $C'>0$ independent of $\epsilon >0$.  Applying Fubini-Tonelli and the Chapman-Kolmogorov equations, it then follows that 
\begin{align*}
\rho_W(\nu_1 \mathscr{P}_t, \nu_2 \mathscr{P}_t) \leq C' \rho_W(\nu_1\mathscr{P}_n^{t_0}, \nu_2\mathscr{P}_n^{t_0})\leq C C' \delta^{n} \rho_W(\nu_1, \nu_2).  
\end{align*}   
Picking $\eta= -\frac{1}{t_0} \log \delta$ and $C'' = C C'/\delta^{1/t_0}$ produces the desired estimate
\begin{align*}
\rho_W(\nu_1 \mathscr{P}_t, \nu_2 \mathscr{P}_t) \leq C'' e^{-\eta t} \rho_W(\nu_1, \nu_2) 
\end{align*}
which is satisfied for all $t\geq 0$ and $\nu_i \in \mathcal{M}_W$.    

\end{proof}

\bibliographystyle{plain}
\bibliography{NHref}

\begin{thebibliography}{10}

\bibitem{Ahn_12}
Sungjin Ahn, Anoop Korattikara, and Max Welling.
\newblock Bayesian posterior sampling via stochastic gradient fisher scoring.
\newblock {\em arXiv preprint arXiv:1206.6380}, 2012.

\bibitem{AKM12}
Avanti Athreya, Tiffany Kolba, and Jonathan~C. Mattingly.
\newblock Propagating {L}yapunov functions to prove noise-induced
  stabilization.
\newblock {\em Electron. J. Probab.}, 17:no. 96, 38, 2012.

\bibitem{Chen_14}
Tianqi Chen, Emily Fox, and Carlos Guestrin.
\newblock Stochastic gradient hamiltonian monte carlo.
\newblock In {\em International Conference on Machine Learning}, pages
  1683--1691, 2014.

\bibitem{CG_10}
Florian Conrad and Martin Grothaus.
\newblock Construction, ergodicity and rate of convergence of {$N$}-particle
  {L}angevin dynamics with singular potentials.
\newblock {\em J. Evol. Equ.}, 10(3):623--662, 2010.

\bibitem{cooke17:_geomet_ergod_two}
Ben Cooke, David~P. Herzog, Jonathan~C .~Mattingly, Scott~A. McKinley, and
  Scott~C. Schmidler.
\newblock Geometric ergodicity of two--dimensional hamiltonian systems with a
  lennard--jones--like repulsive potential.
\newblock {\em Communications in Mathematical Science}, 2017.

\bibitem{Ding_14}
Nan Ding, Youhan Fang, Ryan Babbush, Changyou Chen, Robert~D Skeel, and Hartmut
  Neven.
\newblock Bayesian sampling using stochastic gradient thermostats.
\newblock In {\em Advances in neural information processing systems}, pages
  3203--3211, 2014.

\bibitem{Dua_87}
Simon Duane, Anthony~D Kennedy, Brian~J Pendleton, and Duncan Roweth.
\newblock Hybrid monte carlo.
\newblock {\em Physics letters B}, 195(2):216--222, 1987.

\bibitem{GHM_17}
Nathan~E Glatt-Holtz, David~P Herzog, and Jonathan~C Mattingly.
\newblock Scaling and saturation in infinite-dimensional control problems with
  applications to stochastic partial differential equations.
\newblock {\em arXiv preprint arXiv:1706.01997}, 2017.

\bibitem{Grothaus_Stilgenbauer_2015}
Martin Grothaus and Patrik Stilgenbauer.
\newblock A hypocoercivity related ergodicity method with rate of convergence
  for singularly distorted degenerate kolmogorov equations and applications.
\newblock {\em Integral Equations and Operator Theory}, 83(3):331–379, Nov
  2015.
\newblock arXiv: 1506.04386.

\bibitem{HM_06}
Martin Hairer and Jonathan~C. Mattingly.
\newblock Ergodicity of the 2{D} {N}avier-{S}tokes equations with degenerate
  stochastic forcing.
\newblock {\em Ann. of Math. (2)}, 164(3):993--1032, 2006.

\bibitem{HM_08}
Martin Hairer and Jonathan~C. Mattingly.
\newblock Yet another look at harris' ergodic theorem for markov chains.
\newblock {\em Seminar on Stochastic Analysis, Random Fields and Applications
  VI: Centro Stefano Franscini, Ascona, May 2008}, pages 109--117, 2011.

\bibitem{harris_48}
Daniel~L Harris~III.
\newblock On the line-absorption coefficient due to doppler effect and damping.
\newblock {\em The Astrophysical Journal}, 108:112, 1948.

\bibitem{Her_11}
David~P. Herzog.
\newblock Geometry's fundamental role in the stability of stochastic
  differential equations.
\newblock 2011.

\bibitem{HerMat15}
David~P. Herzog and Jonathan Mattingly.
\newblock Noise-induced stabilization of planar flows i.
\newblock {\em Electron. J. Probab.}, 20:43 pp., 2015.

\bibitem{HerMat17}
David~P. Herzog and Jonathan~C. Mattingly.
\newblock Ergodicity and lyapunov functions for langevin dynamics with singular
  potentials.
\newblock {\em arXiv preprint arXiv:1711.02250}, 2017.

\bibitem{Hor_67}
Lars H\"ormander.
\newblock Hypoelliptic second order differential equations.
\newblock {\em Acta Math.}, 119:147--171, 1967.

\bibitem{Horo_91}
Alan~M Horowitz.
\newblock A generalized guided monte carlo algorithm.
\newblock {\em Physics Letters B}, 268(2):247--252, 1991.

\bibitem{hummer_62}
David~G Hummer.
\newblock Non-coherent scattering: I. the redistribution function with doppler
  broadening.
\newblock {\em Monthly Notices of the Royal Astronomical Society},
  125(1):21--37, 1962.

\bibitem{Jones_11}
Andrew Jones and Ben Leimkuhler.
\newblock Adaptive stochastic methods for sampling driven molecular systems.
\newblock {\em The Journal of chemical physics}, 135(8):084125, 2011.

\bibitem{Jur_97}
Velimir Jurdjevic.
\newblock {\em Geometric control theory}, volume~52.
\newblock Cambridge university press, 1997.

\bibitem{Kh_12}
Rafail Khasminskii.
\newblock {\em Stochastic stability of differential equations}, volume~66 of
  {\em Stochastic Modelling and Applied Probability}.
\newblock Springer, Heidelberg, second edition, 2012.
\newblock With contributions by G. N. Milstein and M. B. Nevelson.

\bibitem{leimkuhler2015computation}
Benedict Leimkuhler, Charles Matthews, and Gabriel Stoltz.
\newblock The computation of averages from equilibrium and nonequilibrium
  langevin molecular dynamics.
\newblock {\em IMA Journal of Numerical Analysis}, 36(1):13--79, 2015.

\bibitem{lether_97}
Frank~G Lether.
\newblock Constrained near-minimax rational approximations to dawson's
  integral.
\newblock {\em Applied mathematics and computation}, 88(2-3):267--274, 1997.

\bibitem{MSH_02}
J.~C. Mattingly, A.~M. Stuart, and D.~J. Higham.
\newblock Ergodicity for {SDE}s and approximations: locally {L}ipschitz vector
  fields and degenerate noise.
\newblock {\em Stochastic Process. Appl.}, 101(2):185--232, 2002.

\bibitem{mccabe_74}
JH~McCabe.
\newblock A continued fraction expansion, with a truncation error estimate, for
  dawson's integral.
\newblock {\em Mathematics of Computation}, 28(127):811--816, 1974.

\bibitem{Met_53}
Nicholas Metropolis, Arianna~W Rosenbluth, Marshall~N Rosenbluth, Augusta~H
  Teller, and Edward Teller.
\newblock Equation of state calculations by fast computing machines.
\newblock {\em The journal of chemical physics}, 21(6):1087--1092, 1953.

\bibitem{MTIII}
Sean~P. Meyn and R.~L. Tweedie.
\newblock Stability of markovian processes {III}: Foster-lyapunov criteria for
  continuous-time processes.
\newblock {\em Advances in Applied Probability}, 25(3):518--548, 1993.

\bibitem{Patterson_13}
Sam Patterson and Yee~Whye Teh.
\newblock Stochastic gradient riemannian langevin dynamics on the probability
  simplex.
\newblock In {\em Advances in Neural Information Processing Systems}, pages
  3102--3110, 2013.

\bibitem{RB_06}
Luc Rey-Bellet.
\newblock Ergodic properties of {M}arkov processes.
\newblock In {\em Open quantum systems. {II}}, volume 1881 of {\em Lecture
  Notes in Math.}, pages 1--39. Springer, Berlin, 2006.

\bibitem{sajo_93}
Erno Sajo.
\newblock On the recursive properties of dawson's integral.
\newblock {\em Journal of Physics A: Mathematical and General}, 26(12):2977,
  1993.

\bibitem{Dett_07}
Alex~A Samoletov, Carl~P Dettmann, and Mark~AJ Chaplain.
\newblock Thermostats for ``slow" configurational modes.
\newblock {\em Journal of Statistical Physics}, 128(6):1321--1336, 2007.

\bibitem{Shi_14}
Armen Shirikyan.
\newblock Approximate controllability of the viscous burgers equation on the
  real line.
\newblock In {\em Geometric control theory and sub-Riemannian geometry}, pages
  351--370. Springer, 2014.

\bibitem{SV_72}
Daniel~W. Stroock and S.~R.~S. Varadhan.
\newblock On the support of diffusion processes with applications to the strong
  maximum principle.
\newblock In {\em Proceedings of the Sixth Berkeley Symposium on Mathematical
  Statistics and Probability}, pages 333--359. Univ. California Press,
  Berkeley, Calif., 1972.

\bibitem{SV_73}
Daniel~W. Stroock and S.~R.~S. Varadhan.
\newblock Probability theory and the strong maximum principle.
\newblock In {\em Partial differential equations ({P}roc. {S}ympos. {P}ure
  {M}ath., {V}ol. {XXIII}, {U}niv. {C}alifornia, {B}erkeley, {C}alif., 1971)},
  pages 215--220. Amer. Math. Soc., Providence, R.I., 1973.

\bibitem{Talay_2002}
D.~Talay.
\newblock Stochastic hamiltonian systems: exponential convergence to the
  invariant measure, and discretization by the implicit euler scheme.
\newblock {\em Markov Processes and Related Fields}, 8(2):163--198, 2002.
\newblock Inhomogeneous random systems (Cergy-Pontoise, 2001).

\bibitem{Villani_2009}
C\'edric Villani.
\newblock Hypocoercivity.
\newblock {\em Mem. Amer. Math. Soc.}, 202(950):iv+141, 2009.

\bibitem{Welling_11}
Max Welling and Yee~W Teh.
\newblock Bayesian learning via stochastic gradient langevin dynamics.
\newblock In {\em Proceedings of the 28th International Conference on Machine
  Learning (ICML-11)}, pages 681--688, 2011.

\end{thebibliography}

\end{document}